\documentclass[12pt, oneside]{amsart}       

\usepackage[T1]{fontenc}
\usepackage{mathpazo}
\usepackage{tabularx}
  
\usepackage{hyperref}
\usepackage{booktabs}
\usepackage{xcolor}
\usepackage{tcolorbox}
\hypersetup{
    colorlinks,
    linkcolor={red!50!black},
    citecolor={blue!50!black},
    urlcolor={blue!80!black}
}

\usepackage[all,cmtip]{xy}
\usepackage{graphicx}    
\usepackage{tikz-cd}            
\usepackage{amsthm,amssymb, amsfonts}   
\usepackage{amsaddr} 
\usepackage{mathrsfs}
\newtheorem{theorem}{Theorem}[section]
\newtheorem{proposition}[theorem]{Proposition}
\newtheorem{corollary}[theorem]{Corollary}
\newtheorem{definition}[theorem]{Definition}
\newtheorem{lemma}[theorem]{Lemma}
\theoremstyle{remark}
\newtheorem*{remark}{Remark}
\usepackage{tcolorbox}

\renewcommand{\ge}{\geqslant}
\renewcommand{\geq}{\geqslant}

\renewcommand{\leq}{\leqslant}
 
\usepackage[utf8]{inputenc}

\newcommand{\OO}{\mathcal{O}}

\newcommand{\aff}{\mathbf{A}}

\newcommand{\clm}{\operatorname{colim}}

\newcommand{\dl}{^{\vee}}

\newcommand{\g}{\mathfrak{g}}
\title[Symbols at infinity]{Symbols at infinity and Lie algebras of vector fields}
\author{Emile Bouaziz}
\thanks{Shanghai Institute for Mathematics and Interdisciplinary Sciences, Fudan University. Block A, International Innovation Plaza, No. 657 Songhu Road, Yangpu District, Shanghai, China. Email: ebouaziz@simis.cn}

\begin{document}\maketitle

\begin{abstract} If $X$ is smooth projective complex variety, $D$ is a simple normal crossings divisor and $A$ an ample divisor with support $D$, then there is an associated filtration of the Lie algebra of on the affine variety $U=X\setminus D$ induced from the subspaces $T_X(\log D)(nA)\subset T_U.$ We show that the associated graded Lie algebra is finitely generated in a strong sense. We deduce as a consequence that vector fields on any smooth affine variety form a finitely generated Lie algebra.
\end{abstract}

\section{Introduction} \subsection{Our main results} If $U$ is an affine variety over $\mathbf{C}$, we will consider $\mathrm{Vect}(U)$, the Lie algebra of vector fields on $U$ equipped with the commutator bracket. For $U=\aff^1$ this gives the familar Witt algebra, with generators $L_n=-z^{n+1}\partial_z$ and relations $$[L_n,L_m]=(n-m)L_{n+m}.$$ A quick check confirms that this is generated as a Lie algebra by $L_{-1},L_1,L_2$, and in particular is finitely generated. One of our main theorems, \ref{main}, generalises this to all smooth affine $U$: \begin{theorem}\label{main (intro)} Let $U$ be a smooth affine variety over $\mathbf C$. Then the Lie algebra of vector fields, $\mathrm{Vect}(U)=H^0(U,T_U)$, is finitely generated.
\end{theorem}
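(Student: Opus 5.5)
We follow the strategy sketched in the abstract: compactify, filter by pole order at infinity, and show the associated graded Lie algebra is finitely generated. By resolution of singularities, choose a smooth projective $X \supset U$ such that $D = X\setminus U$ is a simple normal crossings divisor. Since $U$ is affine, we may choose an ample divisor $A$ with support exactly $D$ (complements of affine opens support effective ample divisors, after possibly blowing up further inside $D$ to keep normal crossings).

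Put $F_n = H^0(X, T_X(\log D)(nA)) \subset \mathrm{Vect}(U)$. Then $\mathrm{Vect}(U) = \bigcup_n F_n$, since any vector field on $U$ has poles of bounded order along $D$. Moreover $[F_n, F_m] \subset F_{n+m}$, because log vector fields are closed under bracket and, for a log vector field $v$, $v(f)$ has no worse poles than $f$. We therefore obtain a graded Lie algebra $\mathrm{gr}\,\mathrm{Vect}(U) = \bigoplus_n F_n/F_{n-1}$. The standard filtered argument then reduces the theorem to this graded algebra: if it is generated by finitely many homogeneous classes, then lifts of these classes together with a basis of some finite-dimensional $F_N$ generate $\mathrm{Vect}(U)$. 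This follows by induction on $n$, since every element of $F_n$ equals a bracket expression in the lifts modulo $F_{n-1}$.

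The key step, and the main obstacle, is finite generation of the graded algebra. Each graded piece injects into $H^0(A, (T_X(\log D)(nA))|_A)$, the "symbols at infinity". For $n \gg 0$, Serre vanishing makes $F_n \to H^0(T_X(\log D)(nA)|_A)$ surjective, so the high graded pieces are sections of $\mathcal{E}\otimes\mathcal{L}^n$ on the scheme $A$, where $\mathcal{E} = T_X(\log D)|_A$ and $\mathcal{L} = \OO(A)|_A$ is ample.

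The first thing to try is to compute the bracket on symbols. For a local equation $t$ of $A$, write $v = t^{-n}\xi$ and $w = t^{-m}\eta$. Modulo lower order, their bracket is
\[
[v,w] \equiv t^{-n-m}\bigl(m\,\eta(\log t)\,\xi - n\,\xi(\log t)\,\eta\bigr)\cdot(\ldots),
\]
where $\xi(\log t)$ denotes the residue pairing of a log field against $d\log t$. This resembles a Witt-type bracket twisted by the residue map $\mathcal{E}\to \OO_A$. Choose a single section $e \in F_1$ (or $F_k$) whose residue along each component is nonvanishing; for example, an Euler-like field exists on each component locally, and one can twist up to get a global section. Then $\mathrm{ad}(e)$ acts on the graded algebra as multiplication by the section $\mathcal{L}$ times a nonzero scalar depending on the degree, up to a correction. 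The aim is to show that the graded pieces $\mathrm{gr}_n$ for $n \ge N$ are spanned by brackets $[\mathrm{gr}_a, \mathrm{gr}_b]$ with $a,b<n$. Using Castelnuovo--Mumford regularity, the multiplication maps $H^0(\mathcal{E}(a))\otimes H^0(\mathcal{L}^b)\to H^0(\mathcal{E}(a+b))$ are surjective for large $a,b$, and the bracket realizes such multiplications up to the degree coefficient $(m-n)$-type factors. Degrees with vanishing factors are handled by choosing $a \ne b$ appropriately. The delicate points are controlling the scalar factors and the residue correction along the different components of $D$ with distinct multiplicities in $A$, and checking that the residue-zero part is also reached, via brackets of residue-zero fields with $e$. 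This reduces generation to the finitely many degrees below $N$, each of which is finite-dimensional, which gives the result.
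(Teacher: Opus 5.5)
Your reduction steps are the same as the paper's: an SNC compactification with $A$ ample supported on $D$, the filtration $F_n$, lifting generators from the associated graded, and Serre vanishing to identify high-degree pieces with symbols on $A$. (For the compactification, the paper starts from the projective closure, whose hyperplane section is ample, and checks that ample support survives the resolving blowups, Lemma \ref{blowups}.)

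The gap is the key step. You describe finite generation of $H^0\bigl(A,\iota_A^*(T_X(\log D)\otimes L^{\otimes\bullet})\bigr)_{\geq N}$ but do not prove it, and the ``delicate points'' you list at the end are the whole content of the theorem.

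The mechanism you propose is also not available. Your symbol bracket drops a term. In fact
\[ [t^{-n}\xi,t^{-m}\eta]=t^{-n-m}\bigl([\xi,\eta]+n\,\eta(\log t)\,\xi-m\,\xi(\log t)\,\eta\bigr), \]
and $t^{-n-m}[\xi,\eta]$ has full pole order and does not vanish on $A$. Moreover $\xi(\log t)|_A$ is not intrinsic: replacing $t$ by $ut$ adds $\xi(u)/u$, so there is no residue map $\mathcal{E}\to\OO_A$.

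More seriously, once $D$ has several components or some $a_i>1$, nothing on $A$ plays the role of your $e$. The section $e=z_i\partial_i$ is canonical and central only in $\iota_i^*T_X(\log D)$ on the \emph{reduced} component $D_i$. On the thickening $a_iD_i$ it is not central, since $[z_i\partial_i,z_i^c\eta]=c\,z_i^c\eta+z_i^c[z_i\partial_i,\eta]$. Near $D_i\cap D_j$ the Euler fields of the two components disagree. A global lift of $ue$ obtained by Serre vanishing has an uncontrolled symbol on the rest of $A$. So ``the bracket realizes such multiplications'' has no justification on $A$, and surjectivity of multiplication maps is beside the point until you have elements that act by multiplication.

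The missing idea is to work one reduced component at a time and then induct over thickenings.
\begin{itemize}
\item On $D_i$ the symbols $ue$ form the subalgebra $\mathfrak{n}(D_i)\simeq R(D_i,\iota_i^*L)^{a_i}_{\mathrm{Eul}}$, with bracket $a_i(p-q)uv\,e$.
\item This is persistently finitely generated by an elementary argument (Lemma \ref{euler algebra finite generation}). Take $p$ divisible by the degrees of generators of the section ring, use $R_p$, $R_{2p}$ and module generators over $R_0[R_p]$, and add by hand the one degree where the coefficient vanishes.
\item The residue-zero quotient $\mathcal{M}_{0,0}(T_{D_i}(\log\Delta_i))$ is a finitely generated module for it by the same trick (Lemma \ref{eul mod fin gen}).
\item For $B=B'+a_iD_i$, the kernel of $\mathcal{S}(B)\to\mathcal{S}(B')$ is filtered by powers of $\OO(-D_i)$ (Lemma \ref{prelim}). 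On its graded pieces the action factors through $\mathcal{S}(D_i)$. Each piece is an extension of $\mathcal{M}_{c,0}(F_c\otimes T_{D_i}(\log\Delta_i))$ by $\mathcal{M}_{c,a_i}(F_c)$; the shift $\lambda=c$ is exactly the multiplicity effect you were worried about.
\item The d\'{e}vissage Lemma \ref{tech} and induction on the number of components then give persistent finite generation of $\mathfrak{s}(A)$.
\end{itemize}
Persistence (finite generation of every tail $\geq N$) is needed because these exact sequences only hold in high degree. That persistent finite generation is exactly what your final sentence takes for granted.
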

The main idea, sketched more thoroughly in \ref{sketch}, consists in choosing a nice enough compactification and filtering $\mathrm{Vect}(U)$ by vanishing order at infinity (in a rational basis of vector fields logarthmic along the boundary). The associated graded is controlled by an algebra of symbols living on the boundary and it is really this algebra that is the focus of much of our efforts.  

In fact, our methods really prove a theorem which is, at least modulo some relatively simple arguments, quite a bit formally stronger than \ref{main}. We state it here:  \begin{theorem}\label{real main - intro} Let $X$ be smooth projective and let $D$ an SNC divisor, and $A$ an ample effective divisor with support $D$. Consider the filtered Lie algebra $$\mathfrak{g}=H^0(X,T_X(\log D)(\infty A)),\,\,F^n\g:=H^0(X,T_X(\log D)(nA)),$$  Then for any $N\geq 0$ the graded Lie algebra $\mathrm{Gr}_{\geq N}^F\g$ is finitely generated. \end{theorem}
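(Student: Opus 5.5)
The plan is to reduce the statement to a question about an explicit algebra of \emph{symbols} on the boundary, and then prove finite generation there by an induction on degree.

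\textbf{Step 1: reduction to large degrees.} Each graded piece $\mathrm{Gr}^n_F\g=F^n\g/F^{n-1}\g$ is finite dimensional, being a subquotient of $H^0(X,T_X(\log D)(nA))$ with $X$ projective. Since $[F^n\g,F^m\g]\subset F^{n+m}\g$, the graded object $\mathrm{Gr}_{\geq N}^F\g$ is a graded Lie algebra. It therefore suffices to find an $M\geq N$ such that every $\mathrm{Gr}^n$ with $n>M$ lies in the Lie subalgebra generated by $\bigoplus_{N\leq k\leq M}\mathrm{Gr}^k$; these finitely many finite dimensional pieces then provide the generators.

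\textbf{Step 2: identification with symbols.} Restriction to the scheme $A$ gives an injection
\[
\mathrm{Gr}^n_F\g\hookrightarrow S_n:=H^0\bigl(A,T_X(\log D)(nA)|_A\bigr).
\]
By Serre vanishing, $H^1(X,T_X(\log D)((n-1)A))=0$ for $n\gg0$, so this injection is an isomorphism for large $n$. Next I compute the induced bracket locally. Let $f$ be a local equation of $A$, and let $\xi,\eta$ be logarithmic vector fields. Because $\xi(f)/f$ is regular, we have
\[
[f^{-n}\xi,f^{-m}\eta]\equiv f^{-n-m}\bigl([\xi,\eta]-n\,\alpha(\xi)\eta+m\,\alpha(\eta)\xi\bigr),
\]
where $\alpha=\iota(d\log f)\colon T_X(\log D)\to\mathcal O_X$. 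Modulo $f$ the term $[\xi,\eta]$ survives, but it has one order less pole, so it dies in the associated graded. Hence on symbols
\[
\{a,b\}=m\,\alpha(b)\,a-n\,\alpha(a)\,b,\qquad a\in S_n,\ b\in S_m .
\]
This is a well-defined $\mathcal O_A$-bilinear operation, since $\alpha$ restricted to $A$ is intrinsic; it is a weighted combination of the residues along the components of $D$. Consequently $S=\bigoplus_n S_n$ is a graded module over the finitely generated section ring $R=\bigoplus_n H^0(A,\mathcal O(nA)|_A)$, and for $n\gg0$ it is generated in bounded degree by ampleness.

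\textbf{Step 3: generation by brackets (the main obstacle).} The goal is to show that $R_{\geq K}\cdot S$ is reached by brackets from bounded degrees. Fix a finite set of generators $s_i\in R_{k_i}$, and lift each to an element $a_i\in S_{k_i}$ with $\alpha(a_i)=s_i$. Such lifts should exist for large $k_i$, since $\alpha$ is surjective on $A$ and cohomology can be killed by twisting. With these lifts,
\[
\{a_i,b\}=-k_i s_i b+m\,\alpha(b)\,a_i .
\]
So $\operatorname{ad}a_i$ equals multiplication by $-k_is_i$, up to a correction term lying on the line spanned by $a_i$. To remove the correction I plan to split $S_m$ as $\ker\alpha$ plus a complement. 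On $\ker\alpha$ the operator $\operatorname{ad}a_i$ is exactly multiplication by $-k_is_i$. Taking differences $a_i-a_i'$ of two lifts of the same $s_i$ produces kernel elements, and bracketing $b$ against these controls the $\alpha(b)$-component. Together with the surjectivity from Step 2, an induction on degree then shows that the subalgebra generated by degrees in $[N,M]$ contains $R_{\geq K}\cdot S_{[N,M]}$, and hence all $S_n$ for $n>M$.

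The delicate points are the following.
\begin{itemize}
\item Near the intersections of components of $D$, $\alpha$ mixes several residues, so the lifts $a_i$ and the splitting of $S_m$ must be chosen uniformly across components.
\item The coefficients $m$ and $k_i$ could vanish or coincide, and this must be ruled out in the degrees used.
\end{itemize}
I would first try to handle both by choosing the generators $s_i$ in two coprime large degrees.
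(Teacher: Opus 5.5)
Your Step 1, and the identification $\mathrm{Gr}^n_F\mathfrak{g}\cong S_n$ for $n\gg0$ by Serre vanishing, match the paper. The proof breaks at the symbol bracket in Step 2, and Step 3 depends on it. Expanding correctly gives
\[
[f^{-n}\xi,f^{-m}\eta]=f^{-n-m}\bigl([\xi,\eta]-m\,\alpha(\xi)\,\eta+n\,\alpha(\eta)\,\xi\bigr).
\]
Your $n$ and $m$ are interchanged. More importantly, the term $f^{-n-m}[\xi,\eta]$ has a pole of order exactly $n+m$, like the other two. It survives in $\mathrm{Gr}$; it is the term $[v,w]s^nt^m$ in the paper's bracket on $\mathcal{S}(B)$.

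Take $\mathbf{A}^2\subset\mathbf{P}^2$ with $A$ the line at infinity. Then $\mathrm{Gr}^n_F$ consists of vector fields with homogeneous coefficients of degree $n+1$, and the graded bracket is the ordinary one. For instance $[x^2\partial_y,y^2\partial_x]=2x^2y\partial_x-2xy^2\partial_y$. In the chart $u=1/x$, $v=y/x$ its symbol is $u^{-2}\bigl(-2v\,u\partial_u-4v^2\partial_v\bigr)|_{u=0}$, whereas your formula gives $u^{-2}(-v^2\partial_v)$. So $S$ is not an $\mathcal{O}_A$-bilinear algebra: $\mathrm{ad}\,a$ is a first-order differential operator.

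Your $\alpha|_A$ is also not intrinsic. Replacing $f$ by $gf$ adds $\iota(d\log g)$, which need not vanish on $A$. An intrinsic $\alpha$ would split $0\to\mathcal{O}_{D_i}\to\iota_i^*T_X(\log D)\to T_{D_i}(\log\Delta_i)\to0$, and this already fails for a line in $\mathbf{P}^2$. So your formula is not even well defined, and neither are the lifts $a_i$ with $\alpha(a_i)=s_i$ or the splitting along $\ker\alpha$.

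Consequently the key claim of Step 3 fails: $\mathrm{ad}\,a_i$ is not multiplication by $-k_is_i$ up to a term on the line of $a_i$. Even its Euler part multiplies by $-m\,s_i$ with $m=\deg b$, and the extra term $[\xi_i,\eta]|_A$ is neither $\mathcal{O}$-linear nor proportional to $a_i$. Taking differences of lifts does not remove it. Step 3 is in any case only a plan: the existence of lifts and both of your delicate points are left open.

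An argument of your type needs an element that is \emph{central} in the log tangent sheaf, so that bracketing with its multiples really acts by multiplication. On a single reduced component this exists. The section $e=z_i\partial_i|_{D_i}$ is canonical and central in $\iota_i^*T_X(\log D)$, and $\mathrm{ad}(ue)$ acts on $\mathcal{S}(D_i)/\mathcal{N}(D_i)$ as multiplication by $-a_inu$. Moreover $\mathfrak{n}(D_i)$ is the Euler Lie algebra $R^{a_i}_{\mathrm{Eul}}$. For it, and for the modules $M^\kappa_{\lambda,\mu}$, the degree-by-degree induction you had in mind does work, with the exceptional degree added by hand.

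On the whole possibly non-reduced, multi-component scheme $A$ there is no such element, and this is the missing idea. The paper inducts on the components of $A$. It filters the kernel of $\mathcal{S}(B)\to\mathcal{S}(B')$ by powers of $D_i$, so that the action on the graded pieces factors through $\mathcal{S}(D_i)$. Over $\mathcal{N}(D_i)$ each graded piece then has sub $\mathcal{M}_{c,a_i}(F_c)$ and quotient $\mathcal{M}_{c,0}(F_c\otimes T_{D_i}(\log\Delta_i))$ (Lemma \ref{prelim}). It then glues the pieces with Lemma \ref{tech}.
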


\begin{remark} Just so the inputs are clear here, we could take $X$ an elliptic curve, $D=\infty$ a base point and $A=3\infty$. Or, taking a point $p$ in a line $\ell\subset \mathbf{P}^2$, we could take $X=\mathrm{Bl}_p\mathbf{P}^2$ with $D=\tilde{\ell}+E$ and $A=2\tilde{\ell}+E,$ where $\tilde{\ell}$ is the strict transform of $\ell$ and $E$ the exceptional divisor.\end{remark}

\begin{remark} The reader will notice above the appearance, perhaps surprising, of the subscript $_{\geq N}$ in $\mathrm{Gr}^F_{\geq N}\g$. Of course, this is a priori stronger than $\mathrm{Gr}^F\g$ being finitely generated. The necessity of this stronger condition in order to prove Theorem \ref{main (intro)} is, roughly, a result of only knowing exactness of $H^0$ after twisting by a suitably high power of an ample line bundle. This is also (one of) the reasons we require $A$ to be ample.  \end{remark}

\subsubsection{Some comments on prior work} In previous joint work of the author, \cite{BB}, the main result of this note appeared as a conjecture. In the case of curves, Theorem 15 in \cite{Mathieu} proves the considerably stronger result that any Lie subalgebra is finitely generated.\footnote{This result certainly does not generalize, $\mathbf{C}[x]\partial_y\subset\mathrm{Vect}(U)$ is not finitely generated.} In the recent preprint \cite{AR} the authors ask whether vector fields on the Koras-Russel cubic form a finitely generated Lie algebra. Our work in particular answers this question.

\subsubsection{The appendix} We include an appendix. It contains a proof, found by ChatGPT 6, of the weaker of our finite generation results, theorem \ref{main (intro)}. GPT was prompted by the author after the writing of the main text. Our prompt gave no hints and an answer was obtained in about ten minutes, with human checking taking not much longer. Indeed, it is a quite remarkably simple and clever proof, maybe even a little too clever.   It would seem, although we cannot be sure and the author's biases are here quite plain, that some of the identities relevant for ChatGPT's proof may be inspired by the arguments of the papers \cite{BF18}, \cite{BF16} and \cite{BR23}.

 \textbf{The reader who is primarily interested in the purely Lie theoretic question of finite generation of vector fields would probably be best served by only reading the appendix. The geometrically inclined reader will perhaps find more meaning in the main text.}

\subsection{AI usage and Acknowledgements} The author made limited use of AI for the main body of this text - Gemini supplied the proof, presumably mundane for more adept geometers, of lemma \ref{blowups} and ChatGPT 5.6 located the references to Stacks Project. On the other hand the appendix, as explained above, \emph{is wholly the work of ChatGPT 6}, performed after the writing of the main text. 

On a more human note, the author has benefited over the years from numerous conversations about Lie algebras of vector fields with Yuly Billig, Colin Ingalls and Henrique Rocha. In particular the relevance of algebras of sections $H^0(Y,L^{\otimes\bullet})$, and the Euler Lie algebras they produce, became clear during a conversation about $\mathcal{AV}$-modules with the above named mathematicians in 2023.

\subsection{Our objects of study and a sketch of the methods} \subsubsection{Filtrations from nice compactifications}\label{sketch}

The proof chooses an embedding $o:U\to X\footnote{$i$ and $j$ are going to be used as indices so we avoid the usual $(i,j)=(\mathrm{closed},\mathrm{open})$ notation in favour of $(\iota,o)$.}$, with $X$ smooth projective with SNC boundary $D=\sum D_i$ supporting an effective ample divisor $A=\sum a_iD_i$. \begin{remark}At a first pass the reader might assume for simplicity that $D$ is already ample, and $A=D$ so that all $a_i=1$. This special case does not suffice to prove the main theorem, but certainly gives a very good sense of the arguments.\end{remark} We let $\iota_A$ denote the inclusion of $A$ and then consider the filtration $F$ of the sheaf $o_*T_U$ given by $F^no_*T_U:=T_X(\log D)(nA)$ for $n\geq0$. This is an exhaustive Lie algebra filtration. Its associated graded agrees in positive degrees with a sheaf of symbols, $$\mathcal{S}(A):=\iota_{A,*}\iota_A^*(T_X(\log D)\otimes\OO_X(A)^{\otimes\bullet}).$$  At the level of global sections this induces a filtration on $\mathrm{Vect}(U)$ given by $$F^n\mathrm{Vect}(U):=H^0(X,T_X(\log D)(nA)).$$ We then show the stronger property that $\mathrm{Gr}^F\mathrm{Vect}(U)$ is finitely generated, and indeed that $\mathrm{Gr}_{\geq N}^F\mathrm{Vect}(U)$ is finitely generated for all $N\geq0$. By the identification of the associated graded above, coupled with Serre vanishing (recall $A$ is ample), it suffices to show that the graded Lie algebra $H^0(X,\mathcal{S}(A))_{\geq N}$ is finitely generated for any $N$. This is the main focus of our efforts.

Finite generation is proven by an induction on the number of components of $D$. If $B\leq A$ is any subdivisor we can also consider $$\mathcal{S}(B):=\iota_{B,*}\iota_B^*(T_X(\log D)\otimes\OO_X(A)^{\otimes\bullet}),$$ then there is a natural restriction map of Lie algebras $$\mathcal{S}(A)\to\iota_{B,*}\iota_B^*\mathcal{S}(A)\simeq\mathcal{S}(B).$$ The simplest case $B=D_i$ has some additional features that make it more managable. We set $L=\OO_X(A)$ and $\Delta_i=\sum_{j\neq i}D_{ij}$ in what follows. Then we have an exact sequence
$$0\to\OO_{D_i}\to\iota_i^*T_X(\log D)\to T_{D_i}(\log\Delta_i)\to0.$$ In local coordinates with respect to which $D_i$ is cut out by $z_i=0$, the image of $1\in\OO_{D_i}$ is given by $\iota_i^* (z_i\partial_i)\footnote{The bracketing here is meant to emphasize that we consider $z_i\partial_i$ as a section of the log tangent sheaf. Considered as a section of the tangent sheaf, the resulting restriction is of course $0$.}$ We denote this section by $e$. Logarithmic vector fields act on $L$ through the inclusion $L=\OO_X(A)\subset o_*\OO_U$, and this action descends to the restrictions and their tensor powers. Then the short exact sequence above supplies a filtration of $\mathcal{S}(D_i)$, with subspace denoted $\mathcal{N}(D_i)$ and quotient $\mathcal{T}(D_i)$. Denoting by $\mathfrak{s}(D_i)$ and $\mathfrak{n}(D_i)$ the resulting Lie algebras of global sections, perhaps the main point is that $\mathfrak{n}(D_i)$ can be identified purely algebraically in terms of the graded ring $H^0(D_i,\iota_i^*L^{\otimes\bullet})$ as what we call an \emph{Euler Lie algebra}, described in \ref{euler intro} below.

Further, there is a natural class of modules for this subalgebra, depending on parameters $\lambda,\mu\in\mathbf C$ and a coherent sheaf $F$ on $D_i$, which we will denote $\mathfrak{m}_{\lambda,\mu}(F)$ and describe properly later, suffice it to say for now that the the module obtained by taking global sections of $\mathcal{T}(D_i)$ is also of this form. Then, assuming that $L$ is ample, we prove:

\begin{lemma}\label{symb finite (intro)} For all $N\geq0$, the Lie algebra $\mathfrak{s}(D_i)_{\geq N}$ is finitely generated. \end{lemma}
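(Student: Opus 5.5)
We use the extension of Lie algebras coming from the short exact sequence
$$0\to\OO_{D_i}\to\iota_i^*T_X(\log D)\to T_{D_i}(\log\Delta_i)\to0.$$
After tensoring with $\iota_i^*L^{\otimes n}$ and taking global sections, we get in each degree $n$ a left exact sequence
$$0\to\mathfrak{n}(D_i)_n\to\mathfrak{s}(D_i)_n\to H^0(D_i,\mathcal{T}(D_i)_n).$$
Here $\iota_i^*L$ is ample on $D_i$, so by Serre vanishing $H^1(D_i,\iota_i^*L^{\otimes n})=0$ for $n\gg0$. Hence the last map is surjective for $n\geq N_0$.

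The plan is to show two things:
\begin{enumerate}
\item[(a)] the Euler Lie algebra $\mathfrak{n}(D_i)_{\geq N}$ is finitely generated for every $N$;
\item[(b)] the quotient part is generated, modulo $\mathfrak{n}$, by finitely many lifts together with brackets against $\mathfrak{n}$.
\end{enumerate}
Then $\mathfrak{s}(D_i)_{\geq N}$ is generated by generators of $\mathfrak{n}(D_i)_{\geq N}$, lifts of finitely many module generators of the quotient, and the finitely many finite-dimensional graded pieces in degrees $N\leq n< N'$ for a suitable $N'$.

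For (a), write $R=\bigoplus_n H^0(D_i,\iota_i^*L^{\otimes n})$. This is a finitely generated graded ring; after passing to a Veronese we may assume it is generated in degree $1$ in high degrees. The bracket on $\mathfrak{n}$ should be computed from the action of $e=\iota_i^*(z_i\partial_i)$ on $L$. Locally $L=\OO(A)$ has generator $z_i^{-a_i}\cdots$, so $e$ acts on $L^{\otimes n}$ by the scalar $-na_i$. This gives the Euler-type bracket
$$[fe,ge]=(\deg g-\deg f)\,a_i\,fg\,e .$$
This is the analogue of the Witt relation $[L_n,L_m]=(n-m)L_{n+m}$. From it, for $\deg f\neq\deg g$ the product $fg$ lies in the bracket of $f$ and $g$. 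So if $R_{\geq N}$ is generated as a ring by elements of degrees in a finite window $[N,2N+c]$, then $\mathfrak{n}_{\geq N}$ is generated by those finite-dimensional pieces. The only care needed is products of equal degree, which we handle by writing them as brackets of elements from different degrees, using two distinct degrees in the window.

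For (b), $\mathfrak{s}$ acts on $\mathfrak{n}$, and $\mathfrak{n}$ acts on the quotient $\mathfrak{m}_{\lambda,\mu}(T_{D_i}(\log\Delta_i))$. For $fe\in\mathfrak{n}_m$ and a section $\xi$ of degree $n$ of the quotient, the bracket is
$$[fe,\xi]= a_i(n\cdot\text{const})f\xi-\xi(f)e .$$
Modulo $\mathfrak{n}$ this is a nonzero multiple of $f\xi$ as soon as the constant $\lambda n+\mu\neq0$, which holds for large $n$. So modulo $\mathfrak{n}$ the bracket with $\mathfrak{n}$ is just multiplication by $R$ up to nonzero scalars. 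The module $\bigoplus_n H^0(D_i,T_{D_i}(\log\Delta_i)\otimes\iota_i^*L^n)$ is a finitely generated graded $R$-module by coherence and ampleness. Take lifts of its generators in degrees up to $N'$, together with $\mathfrak{n}_{\geq N}$. Brackets of these then span $\mathfrak{s}$ modulo $\mathfrak{n}$ in large degrees, and we correct by elements of $\mathfrak{n}$, which are already generated. The finitely many low degrees are covered by adding whole graded pieces as generators.

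The main obstacle I expect is (b). The difficulty is making the scalar in the twisted module action nonvanishing uniformly. One must also handle the requirement that generators and products lie in degrees $\geq N$: producing $f\xi$ in degree $n$ with both $\deg f$ and $\deg\xi$ at least $N$ forces the use of surjectivity of multiplication maps $R_m\otimes M_{n}\to M_{m+n}$ for large $m,n$. This surjectivity is a Castelnuovo–Mumford regularity statement, which I would invoke for the ample line bundle on $D_i$.
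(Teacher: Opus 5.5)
Your decomposition is the paper's. Serre vanishing gives $0\to\mathfrak{n}(D_i)_{\geq N}\to\mathfrak{s}(D_i)_{\geq N}\to\mathfrak{t}(D_i)_{\geq N}\to 0$ for $N\gg0$. Here $\mathfrak{n}(D_i)$ is the Euler algebra of $R=\bigoplus_n H^0(D_i,\iota_i^*L^{\otimes n})$, and $fe$ acts on the quotient by $-a_i\deg(\xi)$ times multiplication. One then combines generators and adds the finitely many low degrees by hand.

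The genuine difference is how you get finite generation of $\mathfrak{n}(D_i)_{\geq N}$ and of the quotient over it. You use a geometric input: surjectivity of multiplication in high degrees. Then, for a fixed $p\geq N$, everything of large degree $d$ is a sum of products of an element of $R_p$ with something of degree $d-p$, and the coefficient is nonzero: $\pm a_i(2p-d)$ in $\mathfrak{n}$, $-a_i(d-p)$ on the quotient. The paper's Lemmas \ref{euler algebra finite generation} and \ref{eul mod fin gen} are purely algebraic instead. Taking $p\geq N$ divisible by all generator degrees, $R$ is integral, hence finite, over the Noetherian ring $R_0[R_p]$. So $R_{\geq N}$ and $M_{\geq N}$ are finite over it, and every element is a sum of terms $c\,r_1\cdots r_l y_j$ with $r_k\in R_p$, built by iterated brackets with $R_p$. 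The one degree where the coefficient vanishes ($R_{2p}$, resp.\ $M_{p+(\lambda+\mu p)/\kappa}$) is added by hand. The paper's route needs only finite generation of $R$ and $M$ and treats all $\lambda,\mu$ uniformly, which is reused later for $\mathfrak{m}_{c,a_i}(F_c)$. Yours gives a more transparent ``peel off one factor of degree $p$'' picture, but must import the surjectivity statement.

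Several of your steps need repair.

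\textbf{The window claim in (a).} The claim that pieces in a window $[N,2N+c]$ generating $R_{\geq N}$ as a ring also generate $\mathfrak{n}_{\geq N}$ is false in general. Take $R=\mathbf{C}[x]$ with $\deg x=2$, $N=1$, $c=1$. The pieces in degrees $[1,3]$ span only the abelian subalgebra $\mathbf{C}x$, and ``two distinct degrees in the window'' simply do not exist there. Passing to a Veronese is also not legitimate, since $\mathfrak{n}$ involves every degree. There are two possible fixes:
\begin{itemize}
\item Include all pieces up to twice the top generator degree. Above that, any product has at least three factors, so splitting off a factor of minimal degree gives unequal degrees and a nonzero bracket coefficient.
\item Use in (a) the same surjectivity $R_pR_{d-p}=R_d$ for $d\gg0$ that you invoke in (b).
\end{itemize}

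\textbf{Justifying the surjectivity.} Castelnuovo--Mumford regularity applies verbatim only to very ample bundles, and $\iota_i^*L$ is merely ample. Fix $p\geq N$ with $\iota_i^*L^{\otimes p}$ globally generated, and let $K$ be the kernel of $H^0(\iota_i^*L^{\otimes p})\otimes\OO_{D_i}\to\iota_i^*L^{\otimes p}$. Serre vanishing of $H^1(K\otimes F\otimes\iota_i^*L^{\otimes(d-p)})$ for $d\gg0$ then gives the surjectivity. One fixed $p$ suffices; you do not need all large pairs $(m,n)$.

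\textbf{The scalar in (b).} There are no free parameters $\lambda,\mu$: the scalar is exactly $-a_i\deg\xi$, which is nonzero since $a_i\geq1$.

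\textbf{$\mathfrak{n}$ is not an ideal.} The bracket $[fe,\xi]$ has the component $-a_i\deg(\xi)f\xi$ outside $\mathfrak{n}$, so ``$\mathfrak{s}$ acts on $\mathfrak{n}$'' is wrong. This is harmless: your combining step only uses that $\mathfrak{n}_{\geq N}$ is $\mathrm{ad}(\mathfrak{n}_{\geq N})$-stable and that the quotient is an $\mathfrak{n}_{\geq N}$-module.
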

\begin{lemma}\label{eul mod fin gen (intro)} For all $N\geq0$, the subalgebra $\mathfrak{n}(D_i)_{\geq N}$ is finitely generated and $\mathfrak{m}_{\lambda,\mu}(F)_{\geq N}$ is a finitely generated module for it.\end{lemma}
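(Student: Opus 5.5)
The plan is to make both structures explicit in terms of section rings on $D_i$, and then combine a Witt-type bracket identity with surjectivity of multiplication maps in high degree. Put $R=\bigoplus_{n\geq0}H^0(D_i,\iota_i^*L^{\otimes n})$. Since $L$ is ample, $\iota_i^*L$ is ample on $D_i$, so $R$ is a finitely generated graded algebra.

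\emph{Explicit form of the bracket.} A section of $\mathcal{N}(D_i)$ of degree $n$ is $fe$ with $f\in R_n$. The Euler field $e=\iota_i^*(z_i\partial_i)$ acts on $\iota_i^*L^{\otimes n}$ by a scalar proportional to $n$: locally $L^{\otimes n}$ is generated by $z_i^{-a_in}$ times a unit, giving the scalar $-a_in$. Hence
$$[fe,ge]=c\,(n-m)\,fg\,e,\qquad f\in R_n,\ g\in R_m,$$
for a fixed nonzero constant $c$ (essentially $-a_i$, up to sign). Similarly, I expect the structure of $\mathfrak{m}_{\lambda,\mu}(F)=\bigoplus_n H^0(D_i,F\otimes\iota_i^*L^{\otimes n})$ to be given by
$$f\cdot s=(\lambda m+\mu n)\,fs,\qquad f\in R_n,\ s\in \mathfrak{m}_{\lambda,\mu}(F)_m.$$
I will take these formulas as the working definitions.

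\emph{Key geometric input.} By Castelnuovo--Mumford regularity (equivalently, Serre vanishing applied to the kernel of evaluation on $D_i$), there is an $M\geq N$ such that for all $n,m\geq M$ the multiplication maps
$$R_n\otimes R_m\to R_{n+m},\qquad R_n\otimes H^0(F\otimes \iota_i^*L^{\otimes m})\to H^0(F\otimes\iota_i^*L^{\otimes(n+m)})$$
are surjective. This is standard for $\iota_i^*L$ ample.

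\emph{Finite generation of $\mathfrak{n}(D_i)_{\geq N}$.} I claim this algebra is generated by $\bigoplus_{N\leq k\leq 2M+1}R_k$, which is finite dimensional. Argue by induction on $k>2M+1$. Write $k=M+(k-M)$; here $k-M\geq M$ and $k-M\neq M$. Surjectivity of $R_M\otimes R_{k-M}\to R_k$, together with the nonzero coefficient $c(M-(k-M))$, shows that
$$R_k=[R_M,R_{k-M}].$$
Both factors lie in lower degrees $\geq N$, so by induction they are already in the subalgebra generated by the finite set.

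\emph{Finite generation of the module.} The same argument applies: the module degree $k$ part is $R_M\cdot\mathfrak{m}_{k-M}$ with coefficient $\lambda(k-M)+\mu M$, or $R_{M+1}\cdot\mathfrak{m}_{k-M-1}$ with coefficient $\lambda(k-M-1)+\mu(M+1)$. These two coefficients differ by $\mu-\lambda$.
\begin{itemize}
\item If $\lambda\neq\mu$, at least one of the two is nonzero for each $k$, so the module is generated by its components in degrees $N,\dots,2M+2$.
\item If $\lambda=\mu\neq0$, the coefficient is $\lambda k\neq0$ for $k>0$, so the same conclusion holds.
\end{itemize}

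\emph{Main obstacle.} The regularity estimate itself is routine. The real difficulty is the degenerate case $\lambda=\mu=0$, where the action is trivial and finite generation fails unless $\mathfrak{m}$ has only finitely many nonzero graded pieces. So the key step is to verify, from the paper's actual definition of $\mathfrak{m}_{\lambda,\mu}(F)$ (in particular for the module coming from $\mathcal{T}(D_i)$), which parameter values arise, and to check that the scalar formula for the action is correct. This includes the case where $F$ has torsion or lower-dimensional support, where surjectivity still follows from regularity applied to $F$.
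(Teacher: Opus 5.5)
Your argument for $\mathfrak{n}(D_i)_{\geq N}$ is correct, and it differs from the paper's. You use surjectivity of $R_M\otimes R_{k-M}\to R_k$ in high degrees, which is a geometric property of section rings of ample bundles. The paper instead passes to the Veronese subring $R(p)=R_0[R_p]$, over which $R$ is module-finite, and inducts on word length; that works for any finitely generated graded algebra with $R_0$ finite dimensional.

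The module half, however, rests on a guessed action, and the guess is wrong. The paper defines $(ue)\cdot w=(\lambda+\mu p-a_in)\,uw$ for $u$ of degree $p$ and $w\in F\otimes(\iota_i^*L)^{\otimes n}$. The parameters enter as a constant plus a multiple of the \emph{algebra} degree. The \emph{module} degree always carries the fixed coefficient $-a_i$, reflecting how $e$ acts on $(\iota_i^*L)^{\otimes n}$. That coefficient is not a free parameter. Write an action as $(\alpha+\beta p+\gamma n)\,uw$ and impose $u\cdot(v\cdot w)-v\cdot(u\cdot w)=[ue,ve]\cdot w$ with $[ue,ve]=a_i(p-q)uve$. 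You get $(\gamma+a_i)(p-q)\bigl(\alpha+\beta(p+q)+\gamma n\bigr)=0$, so $\gamma=-a_i$ unless the action is zero. In your formula $f\cdot s=(\lambda m+\mu n)fs$ the module-degree coefficient is $\lambda$. So it defines a module only when $\lambda=-c$ (your $c$) or $\lambda=\mu=0$, and your case analysis is carried out on the wrong coefficients.

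This error is the source of the case $\lambda=\mu=0$ that you leave open and suspect to be false. That case is precisely the one the paper needs: $\mathfrak{t}(D_i)\simeq\mathfrak{m}_{0,0}(T_{D_i}(\log\Delta_i))$ by Lemma~\ref{tgt term}, and Corollary~\ref{symb finite} relies on it. With the correct action there is no degenerate case, and your own method closes at once. Write $\mathfrak{m}_k=H^0(D_i,F\otimes(\iota_i^*L)^{\otimes k})$. For $k\geq 2M$, surjectivity gives $\mathfrak{m}_k=R_Me\cdot\mathfrak{m}_{k-M}$ up to the scalar $\lambda+\mu M-a_i(k-M)$. Since $a_i>0$, this scalar vanishes for at most one $k$. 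Hence $\mathfrak{m}_{\lambda,\mu}(F)_{\geq N}$ is generated over $\mathfrak{n}(D_i)_{\geq N}$ by $\bigoplus_{N\leq k<2M}\mathfrak{m}_k$ together with that single exceptional graded piece added by hand. This is exactly the device in the paper's proof of Lemma~\ref{eul mod fin gen}. As written, though, your proposal does not prove the module statement for the $\mathfrak{m}_{\lambda,\mu}(F)$ actually in the lemma.
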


\subsubsection{Lie algebras coming from graded algebras}\label{euler intro} Lemmas \ref{symb finite (intro)} and \ref{eul mod fin gen (intro)} recorded above are consequences, modulo some simple d\'{e}vissage and standard results on coherent cohomology, of some results of purely algebraic nature. These results are collected in section \ref{just algebra}. They are, whilst quite simple, maybe of interest and we choose to record them here as well. More thorough definitions can be found further in the text. If $R$ is a commutative graded algebra and $\kappa\in\mathbf C^*$ is fixed then we denote by $R_{\mathrm{Eul}}^\kappa$ the Lie algebra with bracket $$[r,s]=\kappa\big(\deg(r)-\deg(s)\big)rs.$$  If $M$ is a graded module for $R$ and $\lambda,\mu\in\mathbf C$ are two parameters then we denote by $M^\kappa_{\lambda,\mu}$ the module for $R_{\mathrm{Eul}}^\kappa$ with action written $r\otimes m\mapsto r\cdot m$ and defined as$$r\cdot m:=\big(\lambda+\mu\deg(r)-\kappa\deg(m)\big)rm.$$ Then we will prove the following lemmas, both purely algebraic and both rather elementary:\begin{lemma} Let $R=\oplus_{n\geq 0} R_n$ with $R_0$ finite dimensional and $R$ finitely generated over $R_0$. Then for any $N\ge 0$ the Lie algebra $R_{\mathrm{Eul},\geq N}$ is finitely generated. \end{lemma}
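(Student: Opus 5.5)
The plan is to exploit the fact that the Euler bracket recovers the ordinary product up to a nonzero scalar whenever the two factors have \emph{different} degrees. Indeed, for homogeneous $a\in R_m$, $b\in R_{n}$ with $m\neq n$ we have $ab=\big(\kappa(m-n)\big)^{-1}[a,b]$. The only task is therefore to show that, in all sufficiently large degrees, $R_n$ is spanned by products of two homogeneous elements of unequal degrees, each of degree $\geq N$. A finite-dimensional space of low-degree generators then suffices.

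\textbf{Setup.} Choose homogeneous generators $x_1,\dots,x_k$ of $R$ as an $R_0$-algebra. Elements of degree $0$ may be discarded, so we may assume each has degree $d_i\geq 1$. Put $e=\max_i d_i$. Since $R_0$ is finite dimensional and $R$ is finitely generated over it, each $R_n$ is finite dimensional. Indeed, $R_n$ is spanned over $R_0$ by the finitely many monomials $x^\alpha$ with $\sum\alpha_i d_i=n$. Set $C=2(N+e)+1$ and take as generating set the finite-dimensional subspace $V=\bigoplus_{N\leq n\leq C}R_n$ of $R_{\mathrm{Eul},\geq N}$. Note that $R_{\geq N}$ is closed under the bracket, since degrees add.

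\textbf{Key step.} Let $n>C$. We show by induction on $n$ that $R_n$ lies in the Lie subalgebra generated by $V$; the base of the induction, $n\leq C$, holds by construction. It suffices to treat elements $r x_{i_1}x_{i_2}\cdots x_{i_\ell}$ with $r\in R_0$ and $\sum_j d_{i_j}=n$. Take the shortest prefix $x_{i_1}\cdots x_{i_p}$ whose degree $m$ satisfies $m\geq\max(N,1)$. Then $m\leq\max(N,1)+e-1\leq N+e$. Set
\[
a=r\,x_{i_1}\cdots x_{i_p}\in R_m,\qquad b=x_{i_{p+1}}\cdots x_{i_\ell}\in R_{n-m}.
\]
Since $n>2(N+e)\geq 2m$, we get $n-m>m\geq N$. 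In particular $a,b$ both lie in $R_{\geq N}$ and $m\neq n-m$. Moreover $a\in V$ because $N\leq m\leq C$, and $b$ lies in the subalgebra generated by $V$ by induction, as $N\leq n-m<n$. Hence the element equals $\big(\kappa(2m-n)\big)^{-1}[a,b]$ and lies in the subalgebra generated by $V$.

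\textbf{Main obstacle.} The only genuine difficulty is that the bracket kills products of equal-degree elements, so one cannot naively split a monomial in half. This is handled by the asymmetric splitting above: a short prefix of bounded degree, which is kept at least $N$ so as to stay inside $R_{\geq N}$, and a long remainder. Including all degrees up to $C$ in the generating set absorbs the low-degree range, where such a split need not exist. The case $N=0$ needs the small care of forcing the prefix to be nonempty, which the $\max(N,1)$ ensures; degree-zero elements themselves are simply included among the generators.
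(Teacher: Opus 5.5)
Your proof is correct. It shares the paper's core mechanism: $[a,b]=\kappa(\deg a-\deg b)ab$ recovers the product up to a nonzero scalar whenever the degrees differ, and the finitely many degrees where this could fail are put in by hand. The decomposition, however, is different.

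\textbf{The paper's route.} It picks $p\geq N$ divisible by all generator degrees. It shows $R_{\geq N}$ is a finite module over $R(p)=R_0[R_p]$, using integrality of the $x_i$ and Noetherianity of $R(p)$. It then builds $cr_1\cdots r_ly_j$ by bracketing a single factor $r_1\in R_p$ against the rest. The only exceptional degree is $2p$, and the generating set is $R_p+R_{2p}+\sum_jR_0y_j$.

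\textbf{Your route.} You work directly with monomials in the algebra generators and split off a prefix of degree in $[\max(N,1),N+e]$. Above degree $C=2(N+e)+1$ the two factors then automatically have unequal degrees. The price is a bulkier generating set $\bigoplus_{N\leq n\leq C}R_n$.

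\textbf{Comparison.} Your route is more elementary. It needs no integrality or Noetherian input, only homogeneous generators of bounded degree and $\dim R_0<\infty$, and it gives an explicit degree bound for the generators. The paper's route is set up to be recycled almost verbatim in Lemma \ref{eul mod fin gen}: one replaces the $y_j$ by module generators $m_j$, and the single exceptional degree becomes $p+(\lambda+\mu p)/\kappa$. Your prefix splitting adapts there too, but the bookkeeping has to be redone. For each of the finitely many prefix degrees $m$, the coefficient $\lambda+\mu m-\kappa(n-m)$ vanishes for at most one $n$.

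The only loose end is the degenerate case with no generators of positive degree, where $e$ is undefined. There $R=R_0$ and the claim is immediate.
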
 \begin{lemma} If $M$ is a finitely generated graded $R$-module then for any $N\geq0$, and $\lambda,\mu\in\mathbf C$, the module $M^\kappa_{\lambda,\mu,\geq N}$ is finitely generated for $R_{\mathrm{Eul},\geq N}$. \end{lemma}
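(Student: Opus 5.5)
Throughout, $M_d$, $R_d$ denote graded pieces, $R_{\mathrm{Eul},\geq N}$ carries the bracket with parameter $\kappa$, and we write $\mathfrak g_N:=R^\kappa_{\mathrm{Eul},\geq N}$. The idea is to reduce to the commutative structure. Since the Euler brackets and actions are ordinary products rescaled by degree-dependent scalars, a finite set generating enough of $M$ over $R$ should also generate over $\mathfrak g_N$. The only danger is when these scalars vanish, and the plan is to keep away from those finitely many degrees.

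\textbf{Step 1: a finite generating set in low degrees.} Choose homogeneous generators $m_1,\dots,m_k$ of $M$ over $R$, together with homogeneous generators $r_1,\dots,r_\ell$ of $R$ over $R_0$. We may take all $\deg r_j\geq 1$, since $R_0$ is absorbed into coefficients. Let $d_0$ bound all of their degrees. Then for $d$ large, every element of $M_d$ is a sum of terms $a\,m$, where $m\in M_e$ lies in a fixed window $e\in[N',N'+d_0]$ and $a\in R_{d-e}$. Here $N'\geq N$ is a threshold to be chosen later, and the window has width $d_0$. This uses only the bounded generator degrees: one peels off factors $r_j$ one at a time. Moreover $a$ is a product of $r_j$'s, hence a sum of products of elements of degree $\geq N$, since we may group the factors into blocks of degree in $[N,N+d_0]$. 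The finite-dimensional spaces $M_e$ for $N\leq e\leq N'+d_0$ are taken as our generating set $S$, using that $M_e$ is finite dimensional, which follows from $R_0$ being finite dimensional.

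\textbf{Step 2: absorbing products.} Write $a=b_1\cdots b_p$ with each $b_q$ homogeneous of degree in $[N,N+d_0]$. The Lie action gives
$$b\cdot m=(\lambda+\mu\deg b-\kappa\deg m)\,bm.$$
Hence $bm$ lies in the submodule generated by $m$ whenever the scalar is nonzero. Apply the factors $b_q$ successively. The current degree $e$ of the element grows, and we need $\lambda+\mu\deg b_q-\kappa e\neq0$ at each step. Since $\kappa\neq 0$ and $\deg b_q$ takes only finitely many values, each value of $\deg b_q$ rules out at most one $e$. So the finitely many bad pairs can be avoided by choosing $N'$ beyond all bad $e$. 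Then every $e\geq N'$ is good, and $a\,m$ lies in $U(\mathfrak g_N)\cdot S$.

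The main obstacle is the set of degrees between $N$ and $N'$ and the bookkeeping of products, but these degrees are finite dimensional and were put into $S$. This proves the module lemma.

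\textbf{Step 3: the algebra lemma.} Apply the same argument to $M=R_{\geq N}$, regarded under the adjoint action. The relevant scalar is $\kappa(\deg b-\deg m)$, which corresponds to $\lambda=0$, $\mu=\kappa$, and the factor $\kappa$ applied to $\deg m$. This scalar vanishes only when $\deg b=\deg m$. We again apply factors of degree $\leq N+d_0$ to elements of degree $\geq N'>N+d_0$, so it never vanishes. Note that $R_{\geq N}$ is finitely generated over $R$, since $R$ is Noetherian, being finitely generated over the Artinian ring $R_0$. Thus $R_{\mathrm{Eul},\geq N}$ is generated by the finite-dimensional space $\bigoplus_{N\leq e\leq N'+d_0}R_e$.
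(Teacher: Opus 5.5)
Your overall strategy is sound and differs from the paper's. You push the starting degree past the finitely many $e$ at which $\lambda+\mu\deg b-\kappa e$ can vanish, and you put everything below that threshold into the generating set. However, the bookkeeping in Step 1, on which Step 2 relies, fails as written.

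\emph{The grouping claim is false.} The cofactor $a\in R_{d-e}$ cannot in general be grouped into blocks of degree in $[N,N+d_0]$. Take $R=M=\mathbf C[x]$ with $\deg x=1$ (so $d_0=1$) and $N=3$: products of elements of degree $3$ or $4$ never have degree $5$.

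\emph{Some degrees are not covered at all.} You force $e$ into $[N',N'+d_0]$, but $S$ stops at degree $N'+d_0$. So for $d$ just above $N'+d_0$ the cofactor can have $0<d-e<N$, and then it is not a product of elements of $R_{\mathrm{Eul},\geq N}$ at all. In the example, $x^{N'+2}$ is reached neither by $S$ nor by your argument. Your Step 3 inherits the same defect.

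\emph{The repair is easy.} First, group greedily and merge the final short block into the preceding one. Whenever $\deg a\geq N$, all blocks then have degree in $[N,2N+d_0]$. This is still a finite set of degrees, so there are still only finitely many bad $e$ for $N'$ to exceed. Second, enlarge $S$ to all $M_e$ with $N\leq e<N'+d_0+N$ (taking $N'>d_0$). Then for larger $d$, the first partial product of degree $\geq N'$ leaves a cofactor of degree $\geq N$. With this fix your proof works, and it needs no integrality or Noetherian input, only bounded generator degrees and finite-dimensionality of the $M_e$.

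\emph{How the paper avoids this.} The paper passes to $R(p)=R_0[R_p]$ for a single $p\geq N$ divisible by all generator degrees. Integrality plus Noetherianity make $M_{\geq N}$ finite over $R(p)$. Every element is then a sum of terms $cr_1\cdots r_lm_j$ with all $r_i\in R_p\subset R_{\mathrm{Eul},\geq N}$. So there is exactly one bad degree, $(\lambda+\mu p)/\kappa$, and the single graded piece $M_{p+(\lambda+\mu p)/\kappa}$ it obstructs is added by hand.
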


\section{Basics and notation}This section is dedicated to some simple background recollections, much of which was already sketched more hastily in the introduction. In the first subsection we set up some terminology regarding graded Lie algebras that will make use of throughout. We also record some very simple d\'{e}vissage arguments which we will make use of later. 

The second subsection is much more geometric. In it we fix some notation for logarithmic vector fields and their restrictions to the boundary. We also introduce some of the sheaves of Lie algebras which will be important to us throughout the remainder of this note.

\subsection{Graded Lie algebras}\label{Lie bit} Let $\g_\bullet=\oplus_n\g_n$ be a graded Lie algebra. We write $$\g_{\geq n}=\oplus_{i\geq n}\g_i.$$ \begin{definition}A graded Lie algebra $\g_\bullet$ is called persistently finitely generated if, for all $n\geq 0$, the Lie algebra $\g_{\geq n}$ is finitely generated. A graded module $M_\bullet$ for $\g_\bullet$ is called persistently finitely generated if, for all $n\geq 0$, $M_{\geq n}$ is finitely generated as a module for $\g_{\geq n}$. \end{definition}\begin{remark}Persistent finite generation of course implies finite generation. If each $\g_n$ is finite dimensional then finite generation of any $\g_{\geq N}$ implies finite generation of $\g_{\geq M}$ for all $M\leq N$. Finite generation need not imply persistent fintie generation, indeed this fails for $\g$ the free Lie algebra on two generators with grading induced by commutator length. \end{remark} We record two trivial lemmas for organisational purposes. \begin{lemma}\label{gr finite} Let $\g=\clm_{n\geq 0} F_n\g$ be a filtered Lie algebra. If the associated graded $\mathrm{Gr}^F\g$ is finitely generated then so too is $\g$. \end{lemma}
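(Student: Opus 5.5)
The plan is to lift homogeneous generators of $\mathrm{Gr}^F\g$ to $\g$ and show that the Lie subalgebra they generate meets every step of the filtration in enough elements to recover all of $\g$. As the paper writes it, $\g=\clm_{n\geq 0}F_n\g$: the indexing is increasing, $F_0\g\subset F_1\g\subset\cdots$ with $[F_a\g,F_b\g]\subset F_{a+b}\g$, and we set $F_{-1}\g=0$. The structure maps are injective, so the colimit is the union $\bigcup_n F_n\g$. The passage to the limit therefore amounts to the statement that every element of $\g$ lies in some $F_n\g$, and I will use exactly this fact at the end.

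\textbf{Setup.} Since $\mathrm{Gr}^F\g$ is finitely generated, it is generated by finitely many homogeneous elements $\bar x_1,\dots,\bar x_k$, with $\bar x_i\in F_{d_i}\g/F_{d_i-1}\g$ (decompose any finite generating set into homogeneous components). Choose lifts $x_i\in F_{d_i}\g$ and let $\mathfrak h\subset\g$ be the Lie subalgebra they generate. The goal is $\mathfrak h=\g$.

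\textbf{Step 1 (graded surjectivity).} Show that for each $n$ the map $\mathfrak h\cap F_n\g\to \mathrm{Gr}^F_n\g$ is surjective. Every element of $\mathrm{Gr}^F_n\g$ is a linear combination of iterated brackets $[\bar x_{i_1},[\bar x_{i_2},\dots,\bar x_{i_r}]\cdots]$ with $d_{i_1}+\dots+d_{i_r}=n$. The corresponding bracket of the lifts $x_{i_j}$ lies in $\mathfrak h$ and, by compatibility of the bracket with $F$, also in $F_n\g$. Its image in $F_n\g/F_{n-1}\g$ is exactly the graded bracket, because the bracket on $\mathrm{Gr}^F\g$ is induced from that on $\g$.

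\textbf{Step 2 (induction on filtration degree).} Prove $F_n\g\subset\mathfrak h$ by induction on $n$; the base case is $F_{-1}\g=0$. Given $x\in F_n\g$, Step 1 provides $h\in\mathfrak h\cap F_n\g$ with the same image in $\mathrm{Gr}^F_n\g$. Then $x-h\in F_{n-1}\g\subset\mathfrak h$ by the inductive hypothesis, so $x\in\mathfrak h$.

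\textbf{Step 3 (the limit step).} Every $x\in\g=\clm_n F_n\g$ lies in some $F_n\g$, so Step 2 gives $\mathfrak h=\g$. This is the only point where the colimit description enters, and I expect it to be the one genuinely delicate point. The induction in Step 2 lowers the filtration index, so it terminates at $F_{-1}\g=0$ precisely because each element has finite filtration degree. If one read the filtration instead as a complete decreasing one with $\g$ an inverse limit, the same procedure would only express $x$ as a convergent infinite series of elements of $\mathfrak h$. That would give topological generation, and would need an additional argument (for instance, a bound showing the corrections vanish after finitely many steps) to reach honest finite generation. In the setting of the paper, $F_n\g=H^0(X,T_X(\log D)(nA))$ is increasing and exhausts $\g$, so Step 3 applies directly and the lemma follows.
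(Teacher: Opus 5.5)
Your proposal is correct and is essentially the paper's argument: you lift homogeneous generators of $\mathrm{Gr}^F\mathfrak{g}$ to $\mathfrak{g}$ and run an induction on filtration degree (the paper's ``evident downward induction'' is exactly your Steps 1--2), and exhaustiveness of the increasing filtration guarantees termination. Your Step 1 spells out the point the paper leaves implicit, namely that iterated brackets of the lifts have as symbols the corresponding graded brackets, so the subalgebra they generate surjects onto each $\mathrm{Gr}^F_n\mathfrak{g}$.
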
 \begin{proof}Pick homogeneous generators and lift them to $\g$ via the symbol map (of vector spaces). An evident downward induction implies that these generate. \end{proof}

\begin{lemma} \label{gr pers fin}Let $\g=\clm_{n\geq 0} F_n\g$ be a filtered Lie algebra with all $F_n\g$ finite dimensional. Let $\mathfrak{s}_\bullet$ be a graded Lie algebra which is persistently finitely generated. If there is a morphism $\mathrm{Gr}^F_\bullet\g\to\mathfrak{s}_\bullet$ which is an equivalence on all sufficiently high truncations $_{\geq N}$, then $\g$ is finitely generated.   \end{lemma}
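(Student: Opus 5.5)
The plan is to reduce to Lemma \ref{gr finite}: it suffices to show that $\mathrm{Gr}^F\g$ is finitely generated. Let $\phi:\mathrm{Gr}^F_\bullet\g\to\mathfrak{s}_\bullet$ be the given morphism, and fix $N$ such that $\phi$ restricts to an isomorphism $\mathrm{Gr}^F_{\geq N}\g\simeq\mathfrak{s}_{\geq N}$. Since $\phi$ is a morphism of graded Lie algebras and $\mathrm{Gr}^F_{\geq N}\g$ is closed under the bracket, this restriction is an isomorphism of Lie algebras and not merely of graded vector spaces.

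By persistent finite generation of $\mathfrak{s}_\bullet$, the algebra $\mathfrak{s}_{\geq N}$ is finitely generated. Pulling generators back along $\phi$ gives finitely many homogeneous elements that generate $\mathrm{Gr}^F_{\geq N}\g$ as a Lie algebra. Next, $\mathrm{Gr}^F_{<N}\g=\oplus_{n<N}F_n\g/F_{n-1}\g$ is finite dimensional, because each $F_n\g$ is. The grading is indexed by $n\geq 0$, since the filtration starts at $F_0$, so this is a finite sum of finite-dimensional pieces. Take a basis of this sum and adjoin it to the previous generators. The resulting finite set spans $\mathrm{Gr}^F_{<N}\g$ and generates $\mathrm{Gr}^F_{\geq N}\g$, so it generates all of $\mathrm{Gr}^F\g$. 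This is exactly the argument of the Remark after the definition: finite generation of a high truncation, together with finite dimensionality of the graded pieces, gives finite generation of the whole.

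Lemma \ref{gr finite} then gives finite generation of $\g$. Concretely, we lift the homogeneous generators to $\g$ and run the downward induction. An element of $F_n\g$ has symbol equal to a bracket polynomial in the generators. The same polynomial in the lifts agrees with it modulo $F_{n-1}\g$, and we induct on $n$. The induction terminates because the filtration is exhaustive and bounded below.

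I expect no real obstacle. The only point requiring care is the grading convention: the associated graded must be nonnegatively graded so that $\mathrm{Gr}_{<N}$ is finite dimensional. We also need only the truncation at $N$, not persistent finite generation of $\mathrm{Gr}^F\g$ itself. Persistence of $\mathfrak{s}$ is used precisely to access the single truncation $\mathfrak{s}_{\geq N}$ where $\phi$ becomes an isomorphism.
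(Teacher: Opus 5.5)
Your proposal is correct and follows the paper's proof: the isomorphism $\mathrm{Gr}^F_{\geq N}\g\simeq\mathfrak{s}_{\geq N}$ together with persistent finite generation gives finitely many generators in high degree. Adjoining bases of the finite-dimensional low-degree pieces then gives a finite generating set for $\mathrm{Gr}^F\g$, and the preceding lemma (lifting generators and running the downward induction) finishes the argument.
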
 \begin{proof} For $n$ sufficiently large $\mathrm{Gr}^F_{\geq n}\g$ is finitely generated as it is isomorphic to $\mathfrak{s}_{\geq n}$ and $\mathfrak{s}$ is persistently finitely generated. Since all $F_i\g$ are finite dimensional, adjoining bases of the low degree terms gives a finite generating set for $\mathrm{Gr}^F\g$. The previous lemma then implies that $\g$ is finitely generated.\end{proof}

\begin{definition} Let $\g_\bullet$ be a graded Lie algebra and let $\mathfrak{m}_\bullet$ be a graded module for $\g_\bullet$. We say that $\mathfrak{m}_\bullet$ is \emph{persistently finitely generated} if, for every $n\geq0$, the tail $\mathfrak{m}_{\geq n}$ is finitely generated as a module for $\g_{\geq n}$.
\end{definition}

Finally, we record a slightly dry and technical lemma. We choose to do so in order that the logic of the proof of lemma \ref{q fin gen} is made clearer. The reader should skip what follows until they encounter lemma \ref{q fin gen}. 

\begin{lemma}\label{tech} Suppose we have a Lie algebra $\g$ with ideal $\mathfrak{K}$ and quotient $\mathfrak{h}$. Assume further that $\mathfrak{K}$ is equipped with a finite length decreasing filtration by $\g$-submodules $$\mathfrak{K}=\mathfrak{K}_0\supset \mathfrak{K}_1\supset\cdots,$$ and assume that the $\g$ action on the associated graded pieces $\mathfrak{K}_c/\mathfrak{K}_{c+1}$ factors through some finitely generated quotient $\g\to\mathfrak{s}.$ Then if $\mathfrak{h}$ is finitely generated and each associated graded piece $\mathfrak{K}_c/\mathfrak{K}_{c+1}$ is finite type as a module for $\mathfrak{s}$, then $\g$ is finitely generated.  \end{lemma}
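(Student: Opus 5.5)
Choose finitely many elements $x_1,\dots,x_r\in\g$ whose images generate $\mathfrak{h}$, and finitely many elements $y_1,\dots,y_t\in\g$ whose images generate the finitely generated quotient $\mathfrak{s}$. For each $c$, choose finitely many elements $k_{c,1},\dots,k_{c,m_c}\in\mathfrak{K}_c$ whose images generate $\mathfrak{K}_c/\mathfrak{K}_{c+1}$ as an $\mathfrak{s}$-module. Since the filtration has finite length, say $\mathfrak{K}_\ell=0$, this is a finite collection. Let $\g'\subset\g$ be the Lie subalgebra generated by all the $x_i$, $y_j$ and $k_{c,a}$. The goal is to show $\g'=\g$.

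First I will show that $\g'\supset\mathfrak{K}$, by downward induction on $c$: I claim $\mathfrak{K}_c\subset\g'$ for all $c$, which is trivial for $c=\ell$. Suppose $\mathfrak{K}_{c+1}\subset\g'$ and take $k\in\mathfrak{K}_c$. Its class in $\mathfrak{K}_c/\mathfrak{K}_{c+1}$ is a finite sum of terms $\bar s_1\cdots\bar s_p\cdot\bar k_{c,a}$, where the $\bar s_q$ lie in $\mathfrak{s}$. The $\mathfrak{s}$-action factors through the universal enveloping algebra of $\mathfrak{s}$. That algebra is spanned by monomials in the generators $\bar y_j$, because the $\bar y_j$ generate $\mathfrak{s}$ as a Lie algebra, so every element of $\mathfrak{s}$ is a Lie polynomial in the $\bar y_j$ and hence a noncommutative polynomial in them. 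So we may take each $s_q$ to be one of the $y_j$. Since $\mathfrak{K}_c$ is a $\g$-submodule, the iterated bracket $[y_{j_1},[\cdots,[y_{j_p},k_{c,a}]]]$ lies in $\mathfrak{K}_c\cap\g'$. It lifts the corresponding term because the $\g$-action on the subquotient factors through $\mathfrak{s}$. Hence $k$ differs from an element of $\g'$ by an element of $\mathfrak{K}_{c+1}\subset\g'$, so $k\in\g'$. This completes the induction, and in particular $\mathfrak{K}=\mathfrak{K}_0\subset\g'$.

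Finally, take any $g\in\g$. Its image in $\mathfrak{h}$ is a Lie polynomial in the images of the $x_i$. The same Lie polynomial in the $x_i$ is an element of $\g'$ with the same image in $\mathfrak{h}=\g/\mathfrak{K}$. So $g$ lies in $\g'+\mathfrak{K}=\g'$, and therefore $\g$ is generated by the finite set chosen at the start.

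The only point needing care is in the inductive step: we replace the $\mathfrak{s}$-action by iterated brackets with the lifts $y_j$. This requires the action on $\mathfrak{K}_c/\mathfrak{K}_{c+1}$ to be induced by the adjoint action of $\g$, which is part of the hypothesis that the $\mathfrak{K}_c$ are $\g$-submodules of the ideal $\mathfrak{K}$. It also requires the finite-type hypothesis to mean finite generation over $U(\mathfrak{s})$. I do not expect any genuine obstacle beyond this.
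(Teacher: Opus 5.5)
Your proposal is correct and follows essentially the same route as the paper. The paper lifts generators of $\mathfrak{s}$ to obtain a subalgebra $\mathfrak{l}$, uses d\'{e}vissage along the filtration to see that $\mathfrak{K}$ is a finitely generated $\mathfrak{l}$-module, and then adds lifts of generators of $\mathfrak{h}$; your downward induction on $c$ is exactly that d\'{e}vissage written out explicitly.
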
 \begin{proof} Choose finitely many generators of $\mathfrak{s}$, lift them to $\g$ and let $\mathfrak{l}$ be the Lie algebra they generate. Then each $\mathfrak{K}_c$ is a module for $\mathfrak{l}$ and the associated gradeds are finitely generated $\mathfrak{l}$-modules since they are pulled back from $\mathfrak{s}$, and are finitely generated $\mathfrak{s}$ modules. Then by d\'{e}vissage $\mathfrak{K}$ is a finitely generated $\mathfrak{l}$-module. We pick finitely many generators for $\mathfrak{l}$, lifts of finitely many generators of $\mathfrak{h}$, and finitely many generators of the $\mathfrak{l}$-module $\mathfrak{K}$. The union of these is then easily seen to generate. \end{proof}

\subsection{Geometric preparations}Let $D=\sum_i D_i$ be a simple normal crossings divisor in a smooth projective variety $X$. Write $\iota:D\to X$ for the inclusion of $D$, $o$ for the inclusion of the open complement $U$, and $\iota_j$ for the inclusion of the $j$th component and $I_j$ for the corresponding ideal sheaf. If $B=\sum_j b_j D_j$ is an effective divisor with support contained in a union of smooth components of $D$, we denote by $\iota_B:B\to X$ the inclusion and $\iota_B$ the corresponding ideal sheaf. For fixed $j$ we let $\Delta_j:=\sum_{i\neq j}D_{ij}$. Let $T_X(\log D)$\footnote{We have chosen this notation over the more customary $T_X(-\log D)$ as it is less ugly and there is no real risk of confusion.} denote the sheaf of logarithmic vector fields, ie vector fields preserving the ideal sheaves $I_j$ of each $D_j$. It is closed under the bracket of vector fields and there are exact sequences
$$0\to T_X(\log D)\to T_X\to\bigoplus_j\iota_{j,*}N_{D_j/X}\to0,$$
$$0\to\OO_{D_j}\to\iota_j^*T_X(\log D)\to T_{D_j}(\log\Delta_j)\to0.$$
 Both exact sequences encode residues. Indeed the first is, after a rotation in the derived category, the coherent dual to the usual residue sequence. The inclusion of the kernel in the second is dual to the map $\iota_j^*\Omega_X(\log D)\to\OO_{D_j}$, adjoint to the usual residue map. More canonically the kernel is $\mathrm{End}_{D_j}(N^*_{D_j/X})$, with the identity corresponding locally to $z_j\partial_j$. It is the second sequence of which we make considerable use throughout. We now fix another effective divisor $$A=\sum a_jD_j, \,(a_j\geq 0),$$ whose support is a union of components of $D$. Eventually we will assume that $A$ is ample, hence the name. 
 
\subsubsection{Actions of log vector fields} The sheaf $T_X$ acts via derivations on the sheaf of rational functions $o_*\OO_U$, preserving the subspace $\OO_X$. Its logarithmic version $T_X(\log D)$ further preserves the subsheaves $\OO_X(B)\subset o_*\OO_U$, whenever $B=\sum b_j D_j$ is some divisor whose support is union of components of $D$. Recalling that logarithmic vector fields, by definition, preserve each ideal sheaf $I_j$, and hence each $I_B$, it follows that $\iota_B^*T_X(\log D)$ inherits a natural Lie bracket. Note that, whilst clear, this is not purely formal, indeed $\iota_B^*T_X$ does not inherit such a bracket in a natural fashion.

\begin{remark}\label{coord} Analytic locally we may choose coordinates $(z_1,\cdots,z_n)$ on $X$ so that $D$ and $A$ are given, for some $r\leq n$, by $$D=\{z_1\cdots z_r=0\}\,\,A=\{z_1^{a_1}\cdots z_r^{a_r}=0\}.$$ We will sometimes write $z^B$ for the monomial cutting out the effective divisor $B=\sum_j b_jD_j$, which is to say we set $$z^B=\prod_i z_i^{b_i}.$$ Then $T_X$ is trivialised by the basis $\{\partial_1,\cdots,\partial_n\}$ and $T_X(\log D)$ by $$\{z_1\partial_1,\cdots,z_r\partial_r,\partial_{r+1},\cdots,\partial_n\}.$$ The line bundle $L=\OO_X(A)\subset o_*\OO_{X\setminus D}$ is given by $z^{-A}\OO_X\subset o_*\OO_{X\setminus D}$ and has trivialising section $z^{-A}.$ \end{remark}

As we have seen above, $\iota_B^*T_X(\log D)$ inherits a natural Lie bracket, and $T_X(\log D)$ acts on $L=\OO_X(A)$. Further, it is easy to check that there is an induced action of $\iota_B^*T_X(\log D)$ on $\iota_B^*L$, whence on all of its tensor powers by the Leibniz rule.

  \begin{definition} \label{Q(B) bracket def} We denote by $\mathcal{S}(B)_\bullet=\iota_{B,*}\iota_B^*\big(T_X(\log D)\otimes L^{\otimes\bullet}\big)$ the sheaf of graded Lie algebras equipped with the bracket given  $$[vs^{ n},wt^{ m}]=[v,w]s^{n}t^{m}-nvw(s)s^{n-1}t^{ m}+mwv(t)s^{ n}t^{m-1},$$ where $v,w$ are local sections of $T_X(\log D)$ and $s,t$ local sections of $\OO_X(A)$. We denote by $\mathfrak{s}(B)$ the Lie algebra of global sections. \end{definition}

\begin{remark} The presence of $\iota_{B,*}$ is really just some good manners, grammatically speaking. It is of no real substance and ensures only that the various sheaves $\mathcal{S}(B)$ live in the same category. Further, these good manners are discarded in \ref{S algebra}, for which we apologise in advance.  \end{remark}

We have the following elementary lemma.
\begin{lemma}\label{symbols} The subsheaves $F^no_*T_U:=T_X(\log D)(nA)$, for $n\geq0$, define an exhaustive Lie algebra filtration. There is a natural surjective morphism of graded sheaves of Lie algebras
$$\mathrm{Gr}_\bullet^F o_*T_U\longrightarrow\mathcal{S}(A),$$
which is an equivalence in every positive degree. 
\end{lemma}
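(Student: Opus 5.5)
The plan is to work entirely with the local description of Remark \ref{coord} and then glue using naturality. First I would check that $F^n o_*T_U=T_X(\log D)(nA)$ is a Lie filtration. In local coordinates a section of $F^n$ is $z^{-nA}v$ with $v$ a local section of $T_X(\log D)$. For $v,w$ log vector fields we have
$$[z^{-nA}v,z^{-mA}w]=z^{-(n+m)A}\big([v,w]+z^{nA}v(z^{-mA})w\cdot z^{mA}\cdot z^{-nA}\cdots\big),$$
and more cleanly $v(z^{-mA})=-m\,\frac{v(z^A)}{z^A}z^{-mA}$. Here $v(z^A)/z^A\in\OO_X$ precisely because $v$ preserves each ideal $I_j$. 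So the bracket lands in $F^{n+m}$. The filtration is increasing because $A$ is effective. It is exhaustive because any section of $o_*T_U$ is locally a log vector field times a power of $z^{-1}$, and since $\mathrm{supp}\,A=D$ this is bounded by a power of $z^{-A}$.

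Next I would construct the map. We have $F^n=T_X(\log D)\otimes L^{\otimes n}$, so $\mathrm{Gr}_n=F^n/F^{n-1}=T_X(\log D)\otimes L^{n}\otimes(\OO_X/\OO_X(-A))$ for $n\geq1$, since $L^{n-1}=L^n(-A)$. This is exactly $\iota_{A,*}\iota_A^*(T_X(\log D)\otimes L^n)=\mathcal{S}(A)_n$. In degree $0$, $\mathrm{Gr}_0=F^0=T_X(\log D)$ surjects onto $\iota_{A,*}\iota_A^*T_X(\log D)$ by restriction. This gives a surjective graded map that is an isomorphism of sheaves in each positive degree.

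It remains to check compatibility with brackets, which I expect to be the only real computation. Write sections as $vs^n$ with $s$ a local section of $L$, viewed as a rational function. Computing in $o_*T_U$, where $s,t$ are functions,
$$[vs^n,wt^m]=[v,w]s^nt^m+n\,v(s)s^{n-1}t^m w-m\,w(t)s^nt^{m-1}v.$$
I would then compare this with Definition \ref{Q(B) bracket def}, reading the terms $v(s)s^{n-1}t^m w$ as the appropriate products in $T_X(\log D)\otimes L^{n+m}$. The point is that $v(s)$ is a section of $L$ because log fields preserve $\OO_X(A)$. The signs and conventions have to be matched with that of the definition: the displayed "$vw(s)$" should be read as $w(s)\,v$ up to the ordering convention. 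The result is that the induced bracket on $\mathrm{Gr}$ is well defined, since terms in lower filtration vanish after reduction mod $A$, and it agrees with the bracket on $\mathcal{S}(A)$. Independence of the choice of local generator $s$ follows by the Leibniz rule, which is how the action on $\iota_A^*L^{\otimes\bullet}$ was defined. Both sides are then natural, so the local identifications glue to a morphism of sheaves of graded Lie algebras.
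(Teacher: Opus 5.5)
Your route is the paper's: build the map by tensoring $0\to\OO_X((n-1)A)\to\OO_X(nA)\to\iota_{A,*}\iota_A^*\OO_X(nA)\to0$ with the locally free sheaf $T_X(\log D)$, then check brackets locally with the Leibniz rule. Your verification that $F$ is an exhaustive Lie filtration is also fine.

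The gap is in the one substantive step, the bracket computation, which is wrong as written. In $o_*T_U$ one has
\[
[s^nv,t^mw]=s^nt^m[v,w]+s^n v(t^m)\,w-t^m w(s^n)\,v=[v,w]s^nt^m-n\,w(s)s^{n-1}t^m\,v+m\,v(t)s^nt^{m-1}\,w .
\]
Each vector field differentiates the \emph{other} coefficient, exactly as in your own first step, where $v$ hits $z^{-mA}$. Your formula instead has $v(s)$ multiplying $w$ and $w(t)$ multiplying $v$, and this is not a sign or ordering convention.

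For instance, near a point lying only on $D_1$, take $s=t=z_1^{-a_1}$ and $v=w=e=z_1\partial_1$, so that $e(s)=-a_1s$. The true bracket is $[s^ne,s^me]=a_1(n-m)s^{n+m}e$, in agreement with the Euler bracket of \S\ref{norm alg brack}. Your formula gives $-a_1(n-m)s^{n+m}e$ instead, and the $[v,w]$ term carries the same sign in both, so no global sign change reconciles them. Your expression therefore does not agree with Definition \ref{Q(B) bracket def}, even under the reading $vw(s)=w(s)\,v$ that you yourself propose. The appeal to ``matching signs and conventions'' hides a real discrepancy rather than resolving it.

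The repair is immediate. With the correct expansion, the three terms coincide verbatim with those of Definition \ref{Q(B) bracket def}. The integrality fact actually needed is that $w(s)$ and $v(t)$ (not $v(s)$) are local sections of $L$, which holds because log vector fields preserve $\OO_X(A)$. The bracket on $\mathrm{Gr}$ is well defined because $[F^{n-1},F^m]+[F^n,F^{m-1}]\subset F^{n+m-1}$. With this correction your argument is the paper's proof.
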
 \begin{proof} There is a certainly a map at the level of graded sheaves, just coming from tensoring the short exact sequences $$0\to \OO_X((n-1)A)\to\OO_X(nA)\to\iota_{A,*}\iota_A^*\OO_X(nA)\to 0$$ by $T_X(\log D)$. Now the claim can be checked locally. Take local sections $s^{n}v,t^mw$ of the left hand side, with $v$ and $w$ logarithmic along $D$ and $s,t$ local sections of $\OO_X(A)$. Then the bracket is computed by the Leibniz formula and seen to agree, upon restriction to $A$, with the bracket defined on $\mathcal{S}(A)$.  \end{proof}

\begin{remark} Note that the fact that this is only true in positive degrees is a consequence of defining $F^{-1}=0$. It is of no consequence for our results or arguments and can just be ignored. \end{remark}
\subsubsection{Restriction to a component} The simplest choice of $B$ is given by $B=D_i$ for some $i$. This case is somewhat special, and we summarise this specialness now. Recall we have fixed $A=\sum a_iD_i$ and we write $L=\OO_X(A)$. The natural morphism $\OO_{D_i}\to \iota_i^*T_X(\log D)$ supplies a section $e$, which we take to be locally given by the restriction of $z_i\partial_i$. It satisfies the following properties, all readily confirmed: \begin{itemize}\item The image of $e$ in $T_{D_i}(\log \Delta_i)$ vanishes. \item $e$ is central, ie it commutes with any local section of $\iota_i^*T_X(\log D)$. \item It acts on $\iota_i^*L$ by multiplication by $-a_i$.\end{itemize}

\subsubsection{The symbol algebra on a component}\label{S algebra} We focus now on $\mathcal{S}(D_i)$. We will allow ourselves some mild abuse of notation, in that we will drop the pushforward $\iota_{i,*}$ in the definition, and instead work on $D_i$ itself. Whilst strictly ungrammatical in that $\mathcal{S}(D_i)$ has now acquired two distinct meanings, we hope this does not cause any confusion.

So, with this notational abuse in mind, we remind the reader that we have set $\mathcal{S}(D_i)=\iota_i^*T_X(\log D)\otimes(\iota_i^*L)^{\otimes\bullet}$, considered as a sheaf of graded Lie algebras with bracket $$[a\otimes s^n,b\otimes t^m]=[a,b]\otimes s^nt^m-na\otimes t^ms^{n-1}b(s)+mb\otimes s^nt^{m-1}a(t),$$
where $a,b$ are local sections of $\iota_i^*T_X(\log D)$ and $s,t$ are local sections of $\iota_i^*L$. Recalling that we write $e$ for the section of $\iota_i^*T_X(\log D)$ supplied by the dual residue sequence, we will write $$\mathcal{N}(D_i):=(\iota_i^*L)^{\otimes\bullet}e\subset \mathcal{S}(D_i),$$ and set $\mathcal{T}(D_i)=\mathcal{S}(D_i)/\mathcal{N}(D_i)$, considered as a module for $\mathcal{N}(D_i)$.

\begin{remark}\label{e wts} The section $e$ of $\mathcal{N}(D_i)$ acts on the weight $n$ piece of $\mathcal{S}(D_i)$ as multiplication by $-a_in$. Indeed this is seen from the above formula, in which the first weight $m$ is zero and the first bracket vanishes as $e$ is central in $\iota_i^*T_{D_i}(\log\Delta_i).$ \end{remark}

\begin{definition} We denote by $\mathfrak{s}(D_i)$ and $\mathfrak{n}(D_i)$ the Lie algebras obtained by applying $H^0(D_i,-)$ to $\mathcal{S}$ and $\mathcal{N}$, and by $\mathfrak{t}(D_i)$ the module for $\mathfrak{n}(D_i)$ obtained by applying $H^0(D_i,-)$ to $\mathcal{T}$. \end{definition} 

\subsubsection{Modules for the normal algebra}\label{norm alg brack} $\mathcal{N}(D_i)$ is a very easy Lie algebra to understand, indeed for local sections $u$ of $(\iota_i^*L)^{\otimes p}$ and $v$ of $(\iota_i^*L)^{\otimes q}$, and recalling $e=z_i\partial_i$, a direct computation gives $$[ue,ve]=a_i(p-q)uv\,e.$$ So the Lie algebra $\mathcal{N}(D_i)$ is obtained from the structure of a graded commutative algebra on its underyling graded sheaf in a very simple way. In section \ref{just algebra} formulate this more precisely and devote a little time to the study of such Lie algebras. 
\\
\begin{definition} Let $F$ be a coherent sheaf on $D_i$ and let $\lambda,\mu\in\mathbf C$. Set $$\mathcal{M}_{\lambda,\mu}(D_i,F)=F\otimes(\iota_i^*L)^{\otimes\bullet},$$ which we will consider as a graded module for $\mathcal{N}(D_i)$ by declaring that $$(ue)\cdot w=(\lambda+\mu p-a_i n)uw$$ whenever $u$ is a local section of $(\iota_i^*L)^{\otimes p}$ and $w$ is a local section of $F\otimes(\iota_i^*L)^{\otimes n}$. When context is clear we abbreviate this to $\mathcal{M}_{\lambda,\mu}(F)$.
\end{definition}

\begin{definition} $\mathfrak{m}_{\lambda,\mu}(D_i,F):=H^0(D_i,\mathcal{M}_{\lambda,\mu}(D_i,F)),$ considered as a graded module for $\mathfrak{n}(D_i)$. We sometimes abbreviate this as $\mathfrak{m}_{\lambda,\mu}(F)$. \end{definition} \begin{lemma}\label{tgt term} There is an equivalence of $\mathcal{N}(D_i)$-modules $$\mathcal{T}(D_i)\simeq\mathcal{M}_{0,0}(T_{D_i}(\log\Delta_i)).$$ \end{lemma}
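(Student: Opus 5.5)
The identification of underlying graded sheaves comes straight from the dual residue sequence. Tensor
$$0\to\OO_{D_i}\to\iota_i^*T_X(\log D)\to T_{D_i}(\log\Delta_i)\to0$$
with the locally free sheaf $(\iota_i^*L)^{\otimes n}$. This shows that in each degree $n$
$$\mathcal{T}(D_i)_n=\mathcal{S}(D_i)_n/\mathcal{N}(D_i)_n\simeq T_{D_i}(\log\Delta_i)\otimes(\iota_i^*L)^{\otimes n},$$
so $\mathcal{T}(D_i)\simeq\mathcal{M}_{0,0}(T_{D_i}(\log\Delta_i))$ as graded sheaves, canonically. What remains is to compare the two actions of $\mathcal{N}(D_i)$. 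It suffices to compute on local generators, since both actions are $\OO$-linear in the appropriate sense. We use the coordinates of Remark \ref{coord}, in which $e=z_i\partial_i$ and $\iota_i^*L$ is trivialised by $s=z^{-A}$.

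Take $u=f s^p$ with $f$ a local function on $D_i$, and a class $w=[b\otimes t^n]$ in $\mathcal{T}(D_i)$. Write the bracket of Definition \ref{Q(B) bracket def} with $a=fe$, viewing $u\,e$ as $fe\otimes s^p$:
$$[fe\otimes s^p,\,b\otimes t^n]=[fe,b]\otimes s^pt^n-p\,b(s)\,fe\otimes s^{p-1}t^n+n\,fe(t)\otimes b\,s^pt^{n-1}.$$

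\begin{itemize}
\item \textbf{Second term.} It is a multiple of $e$, so it lies in $\mathcal{N}(D_i)$ and vanishes in the quotient.
\item \textbf{Third term.} By the listed properties of $e$, we have $e(t)=-a_it$. So this term equals $-a_i n\,u\cdot b\otimes t^n$.
\item \textbf{First term.} We have $[fe,b]=f[e,b]-b(f)e$. Here $[e,b]=0$ because $e$ is central, and $b(f)e$ lies in $\mathcal{N}$. So the first term also vanishes modulo $\mathcal{N}(D_i)$.
\end{itemize}

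Hence $(ue)\cdot w=-a_in\,uw$ in $\mathcal{T}(D_i)$. This is exactly the $\mathcal{M}_{\lambda,\mu}$ action with $\lambda=\mu=0$.

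The main point of care is this first term. One has to check that $\iota_i^*T_X(\log D)$ is indeed a sheaf of Lie algebras and that the $\OO_{D_i}$-linear section $fe$ brackets with $b$ by the Leibniz rule. One also has to check that $b(f)$ makes sense: a log vector field preserves $I_i$, so it acts on $\OO_{D_i}$. Finally, the identification with $T_{D_i}(\log\Delta_i)\otimes(\iota_i^*L)^{\otimes\bullet}$ must be independent of choices. This holds because the quotient map is canonical and the formula above is intrinsic (no coordinate dependence survives in the final expression $-a_in\,uw$).
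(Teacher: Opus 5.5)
Your proposal is correct and follows essentially the same route as the paper: you identify the underlying graded sheaves by tensoring the residue sequence with $(\iota_i^*L)^{\otimes n}$, then compute the bracket of $ue$ with a lift and observe that every term except $-a_in\,uw$ is a multiple of $e$, hence dies in $\mathcal{T}(D_i)$. The only difference is bookkeeping. You expand via the explicit formula of Definition \ref{Q(B) bracket def} and use centrality of $e$ and $e(t)=-a_it$ directly, which re-derives Remark \ref{e wts} inline. The paper instead uses the Leibniz identity $[ue,\widetilde w]=u[e,\widetilde w]-\widetilde w(u)e$ and cites that remark.
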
\begin{proof} Tensoring $\iota_i^*L^{\otimes\bullet}$ by the exact sequence $$0\to\OO_{D_i}\to\iota_i^*T_X(\log D)\to T_{D_i}(\log\Delta_i)\to 0$$ supplies the claim at the level of sheaves. Now choose (locally) a lift of $w\in\mathcal{T}(D_i)$ to $\widetilde{w}\in\mathcal{S}(D_i)$, and let $u$ be a local section of $\iota_i^*L^{\otimes\bullet}$. The action is obtained from $[ue,\widetilde{w}]=u[e,\widetilde{w}]-\widetilde{w}(u)e$ by projecting onto the quotient. The second term vanishes in the quotient and the first is just $-a_inuw$ by remark \ref{e wts}. This matches the definition of $\mathcal{M}_{0,0}(T_{D_i}(\log\Delta_i))$, and we are done. \end{proof}

\section{Some graded algebra}\label{just algebra}

This section provides a slightly broader context for studying the normal algebras and their modules. The results we prove here are of a purely algebraic nature but, of course, our actual intention is to apply them to the cases considered in the previous section. Nonetheless we prefer to separate out this section so that our arguments and objects of study appear in their natural home.

\subsection{The Euler Lie algebra} Let $R=\bigoplus_{n\geq0}R_n$ be a graded commutative $\mathbf C$-algebra and fix $\kappa\in\mathbf C^*$. We denote by $R_{\mathrm{Eul}}^\kappa$ the graded Lie algebra with underlying graded vector space $R$ and bracket $$[a,b]=\kappa(p-q)ab,\,\, (a\in R_p,b\in R_q).$$ \begin{remark} Obviously $\kappa$ is not of any real importance here, and rescaling supplies an isomorphism with the corresponding Lie algebra at $\kappa=1$. The inclusion here is only to agree with the bracket from \ref{norm alg brack}, which had a coefficient of $a_i$. \end{remark}

\begin{remark} This is a specific instance of a more general construction. Let $S$ be a commutative algebra with derivation $\delta$. Then $S\delta\subset\mathrm{Der}(S)$ is a Lie subalgebra, with $[a\delta,b\delta]=(a\delta(b)-b\delta(a))\delta$. If we take $S=R$ and consider the derivation $\delta=-\kappa\nu$, with $\nu$ the Euler vector field supplied by the grading, we obtain the above example. \end{remark}

\begin{lemma}\label{euler algebra finite generation} Let $R=\bigoplus_{n\geq0}R_n$ be a finitely generated graded commutative $\mathbf C$-algebra and suppose that $R_0$ is finite dimensional. Then $R_{\mathrm{Eul}}^\kappa$ is persistently finitely generated.
\end{lemma}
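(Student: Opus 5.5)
Throughout we may take $\kappa=1$, since rescaling the bracket gives an isomorphic Lie algebra. The whole argument rests on one identity: for homogeneous $a\in R_p$ and $b\in R_q$ with $p\neq q$,
$$ab=\frac{1}{p-q}[a,b].$$
So any product of two homogeneous elements of \emph{distinct} degrees lies in the Lie subalgebra generated by those elements. The plan is to show that, in high enough degree, $R_n$ is spanned by such products with both factors of strictly smaller degree that is still at least $N$. A downward-free induction on degree then finishes the argument.

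\textbf{Setup.} Since $R$ is finitely generated and $R_0$ is finite dimensional, we may choose finitely many homogeneous algebra generators $x_1,\dots,x_k$ of positive degrees, so that $R$ is generated by them over $R_0$. Let $d=\max_j\deg x_j$. Then each $R_n$ is finite dimensional, because it is spanned by $R_0$-multiples of the finitely many monomials of degree $n$. Fix $N\geq 0$ and set $C=2N+2d$. We take as candidate generating set a basis of the finite-dimensional space $\bigoplus_{N\leq n\leq C}R_n$, and write $\mathfrak{l}\subset R_{\mathrm{Eul},\geq N}$ for the Lie subalgebra it generates.

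\textbf{Induction.} We prove $R_n\subset\mathfrak{l}$ by induction on $n>C$. The space $R_n$ is spanned by elements $c\,x_{i_1}\cdots x_{i_r}$ with $c\in R_0$, and it suffices to treat one such element. Consider the partial degree sums $s_t=\sum_{u\leq t}\deg x_{i_u}$. These increase in steps of at most $d$, starting at $0$ and ending at $n$. Let $t$ be the first index with $s_t>n/2$, and put $p=s_t$, so that
$$n/2<p\leq n/2+d.$$
Set $a=c\,x_{i_1}\cdots x_{i_t}\in R_p$ (absorbing the degree-zero coefficient into $a$) and $b=x_{i_{t+1}}\cdots x_{i_r}\in R_q$ with $q=n-p$. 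Then:
\begin{itemize}
\item $p\neq q$, because $p>n/2>q$;
\item $q\geq n/2-d> N+d-d=N$, using $n>C$;
\item $p<n$, since $q>0$.
\end{itemize}
Hence $N\leq q<p<n$. By induction, or by the choice of generators when the degree is at most $C$, both $a$ and $b$ lie in $\mathfrak{l}$. The identity above then gives $ab=[a,b]/(p-q)\in\mathfrak{l}$. This proves $R_{\geq N}=\mathfrak{l}$, so $R_{\mathrm{Eul},\geq N}$ is finitely generated for every $N$, which is persistent finite generation.

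\textbf{Main obstacle.} The only real difficulty is that the Euler bracket kills products of two elements of equal degree, and so it cannot directly produce squares such as $x^2$. The partial-sum splitting is what avoids this. Because every monomial of large degree is a product of generators of bounded degree $d$, we can always cut it into two pieces of unequal degrees, each lying in the window $[N,n)$. The threshold $C=2N+2d$ is chosen exactly so that the smaller piece still has degree at least $N$. In degrees up to $C$ nothing needs to be proved, since everything there was put in as a generator.
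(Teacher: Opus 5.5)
Your proof is correct, and it takes a genuinely different route from the paper's.

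\textbf{The paper's route.} The paper fixes $p\ge N$ divisible by every generator degree and passes to the subring $R(p)=R_0[R_p]$. Each generator is integral over $R(p)$, so $R$ is a finite $R(p)$-module, and by Noetherianity $R_{\ge N}$ is finite over $R(p)$, with homogeneous generators $y_j$. The paper then builds each $c\,r_1\cdots r_l y_j$ (with $r_i\in R_p$) by bracketing with one degree-$p$ factor at a time. The coefficient $\kappa(\deg y_j+(l-2)p)$ vanishes only when the target has degree $2p$, so the generating set is $R_p+R_{2p}+\sum_j R_0y_j$.

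\textbf{Your route.} You cut each monomial at the first partial degree sum exceeding $n/2$. This automatically produces two factors of unequal degree. The bound $d$ on generator degrees, together with the threshold $2N+2d$, keeps both factors in the window $[N,n)$. So there is no degenerate degree at all, and you only need each $R_n$ to be finite dimensional; integrality and the Hilbert basis theorem are not used. The trivial case with no positive-degree generators, where $d$ is undefined, should be set aside separately, as the paper does.

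\textbf{What the paper's route buys.} It transfers directly to Lemma \ref{eul mod fin gen}. There an element of $M$ cannot be split into two elements of the Lie algebra, so one must peel algebra factors off a module element. The $R(p)$-module structure, with a single fixed acting degree $p$, carries over, and the single bad degree becomes $p+(\lambda+\mu p)/\kappa$. Your splitting idea can also be adapted to modules, but it needs a little extra case analysis. For a peeled-off factor of degree $p'$ acting on an element of degree $n-p'$, the vanishing condition is $\lambda+(\mu+\kappa)p'=\kappa n$. When $\mu\neq-\kappa$ this excludes at most one $p'$; when $\mu=-\kappa$ it excludes at most one total degree $n$, which can be added by hand.
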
 \begin{proof} Fix $N\geq0$. Let $x_i$ be homogeneous generators of $R$ over $R_0$ of strictly positive degrees $d_i$, and choose $p\geq N$ so that $d_i$ divides $p$ for all $i$. If there are no such generators then $R=R_0$ is finite dimensional and the claim is immediate. Let $R(p)=R_0[R_p]$ be the subalgebra generated by $R_p$. Then $R$ is a finite type module for $R(p)$: every $x_i$ is integral over $R(p)$, satisfying $T^{p/d_i}-x_i^{p/d_i}=0$ with $x_i^{p/d_i}\in R_p$. Then $R_{\geq N}$ is also a finite module for $R(p)$ as $R(p)$ is finitely generated whence Noetherian. Let $y_j$ be homogeneous generators for the $R(p)$-module $R_{\geq N}$. We claim that the finite dimensional subspace $R_p+R_{2p}+\sum_jR_0y_j$ generates $R_{\mathrm{Eul},\geq N}$ as a Lie algebra. Indeed, any element can be written as a sum of elements $$cr_1\cdots r_l y_j,\,\, (r_i\in R_p,c\in R_0).$$ We induct on $l$, so we may assume that we have already obtained $cr_2\cdots r_l y_j$. By definition of the bracket,
$$[cr_2\cdots r_l y_j,r_1]=\kappa\big(\deg(y_j)+(l-2)p\big)cr_1\cdots r_l y_j.$$
If the coefficient vanishes then $cr_1\cdots r_l y_j$ has degree $2p$. These elements have been included by hand, and so we are done. \end{proof}

\subsection{Modules for the Euler Lie algebra}

Let $M=\bigoplus_{n\geq0}M_n$ be a graded module for $R$. Fix $\lambda,\mu\in\mathbf C$ and let $R_{\mathrm{Eul}}^\kappa$ act on $M$ by $$a\cdot m=(\lambda+\mu p-\kappa n)am$$ for $a\in R_p$ and $m\in M_n.$ We denote the resulting module by $M_{\lambda,\mu}^\kappa$.

\begin{lemma}\label{eul mod fin gen} Let $R$ satisfy the hypotheses of Lemma \ref{euler algebra finite generation}, and let $M$ be a finitely generated graded $R$-module. Then, for every $\lambda,\mu\in\mathbf C$, $M^\kappa_{\lambda,\mu}$ is persistently finitely generated as a module for $R_{\mathrm{Eul}}^\kappa$.
\end{lemma}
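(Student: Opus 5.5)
Fix $N\ge 0$, $\lambda,\mu\in\mathbf C$. The plan copies the proof of Lemma \ref{euler algebra finite generation}. Let $x_i$ be homogeneous generators of $R$ over $R_0$ with degrees $d_i>0$, and choose $p\geq N$ divisible by every $d_i$. As shown there, $R$ is a finite module over the Noetherian subalgebra $R(p)=R_0[R_p]$. Hence $M$, and therefore its graded submodule $M_{\geq N}$, are finitely generated $R(p)$-modules. Let $y_j$ be homogeneous generators of $M_{\geq N}$ over $R(p)$. Every element of $M_{\geq N}$ is then a sum of terms $c\,r_1\cdots r_l\,y_j$ with $r_i\in R_p$ and $c\in R_0$. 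Since $R_0$ is finite dimensional, we may absorb it by taking generators $R_0y_j$, a finite dimensional space.

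The candidate finite generating set for the $R^\kappa_{\mathrm{Eul},\geq N}$-module $M_{\geq N}$ is the finite dimensional space spanned by $R_0 y_j$ together with all of the finitely many graded pieces $M_n$ with $n<n_0$, for a threshold $n_0$ to be chosen. Each $M_n$ is finite dimensional because $M$ is finite over $R$ and $R_n$ is finite dimensional. We induct on $l$. Given $m=cr_2\cdots r_ly_j\in M_n$, already in the submodule generated, acting by $r_1\in R_p\subset R_{\geq N}$ gives
$$r_1\cdot m=(\lambda+\mu p-\kappa n)\,r_1 m.$$
Whenever $\lambda+\mu p-\kappa n\neq 0$, this recovers $cr_1\cdots r_ly_j$. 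Since $\kappa\neq0$, the coefficient vanishes for at most one value $n=n^*=(\lambda+\mu p)/\kappa$.

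The main obstacle is this single bad degree $n^*$, where the product lands in degree $n^*+p$. The crude fix is to add the whole graded piece $M_{n^*+p}$ by hand. This is finite dimensional, so it is allowed. The induction then continues: the elements of degree $n^*+p$ are generated directly, and further multiplications by elements of $R_p$ start from degree $n^*+p\ne n^*$, so they have nonzero coefficient again. The only subtlety is that the induction proceeds one factor at a time, so each step only needs the current degree to differ from $n^*$. Elements of degree exactly $n^*+p$ are taken care of by the hand-added generators. Thus the generating set is the span of the $R_0y_j$ together with $M_{n^*+p}$ (when $n^*$ is an integer).

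Finally, we check that every element used in the action lies in $R_{\geq N}$; this holds since $p\geq N$. Decomposing a general element of $M_{\geq N}$ into homogeneous monomial terms completes the argument. I expect the only real care to be in handling the degenerate coefficient; everything else is identical to Lemma \ref{euler algebra finite generation}.
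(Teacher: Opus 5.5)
Your proposal is correct and is essentially the paper's proof. It uses the same choice of $p$ and $R(p)=R_0[R_p]$, the same generating set $\sum_j R_0y_j+M_{p+(\lambda+\mu p)/\kappa}$, and the same induction on the number of $R_p$-factors, with the single degenerate degree handled by hand. Two cosmetic points: the low-degree pieces $M_n$, $n<n_0$, mentioned at the start are never used and can be dropped, and $M_{n^*+p}$ only needs to be added when $n^*$ is an integer $\geq N$, since otherwise the degenerate case cannot occur.
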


\begin{proof} Let $N$ be given. We must prove that $M^\kappa_{\lambda,\mu,\geq N}$ is finitely generated as a module for $R_{\mathrm{Eul},\geq N}$. The case $R=R_0$ is immediate since $M$ is then finite dimensional. Otherwise choose $p$ as above and note that $M_{\geq N}$ is finite type as an $R(p)$-module, since $M$ is a finite type module for $R$ and $R$ a finite type module for $R(p)$. Choose finitely many homogeneous generators $m_j$ for $M_{\geq N}$ as an $R(p)$-module. We claim that
$$\sum_jR_0m_j+M_{p+(\lambda+\mu p)/\kappa}$$
generates.\footnote{The latter summand is interpreted as $0$ if its index is not an integer greater than or equal to $N$.} Every element is a sum of terms $cr_1\cdots r_lm_j$, with $r_i\in R_p$ and $c\in R_0$. Again we induct on $l$. The action gives
$$r_1\cdot(cr_2\cdots r_lm_j)=\big(\lambda+\mu p-\kappa\deg(cr_2\cdots r_lm_j)\big)cr_1\cdots r_lm_j.$$
We are done unless $cr_2\cdots r_lm_j$ has degree $(\lambda+\mu p)/\kappa$, in which case the desired element has degree $p+(\lambda+\mu p)/\kappa$ and has been included by hand.
\end{proof}

\subsection{Some geometric applications}

Let $Y$ be a projective variety with an ample line bundle $L$ and a coherent sheaf $F$. We set
$$R(Y,L)=\bigoplus_{n\geq0}H^0(Y,L^{\otimes n}),\,\,
M(Y,F,L)=\bigoplus_{n\geq0}H^0(Y,F\otimes L^{\otimes n}).$$
Then $R(Y,L)$ is a finitely generated graded $\mathbf C$-algebra and $M(Y,F,L)$ is a finite type graded module over it \cite[Tag 0B5T, Lemma 30.16.1(1), (5)]{Stacks}. We apply this with $Y=D_i$ and the line bundle $\iota_i^*L$, where $L=\OO_X(A)$ as above.

\begin{proposition}\label{norm fin gen} Suppose that $L$ is ample and let $F$ be a coherent sheaf on $D_i$. Then $\mathfrak{n}(D_i)$ is persistently finitely generated. Moreover, for every $\lambda,\mu\in\mathbf C$, $\mathfrak{m}_{\lambda,\mu}(D_i,F)$ is persistently finitely generated as a module for $\mathfrak{n}(D_i)$.
\end{proposition}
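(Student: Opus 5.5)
The plan is to identify $\mathfrak{n}(D_i)$ and $\mathfrak{m}_{\lambda,\mu}(D_i,F)$ with the purely algebraic objects of section \ref{just algebra}, and then quote Lemmas \ref{euler algebra finite generation} and \ref{eul mod fin gen}.

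\textbf{Identifying $\mathfrak{n}(D_i)$.} The sheaf $\mathcal{N}(D_i)=(\iota_i^*L)^{\otimes\bullet}e$ is isomorphic, as a graded sheaf, to $(\iota_i^*L)^{\otimes\bullet}$, since $e$ is a nowhere vanishing section (it is the image of $1\in\OO_{D_i}$ under an injection with locally free cokernel). Taking global sections gives a graded vector space isomorphism $\mathfrak{n}(D_i)\simeq R(D_i,\iota_i^*L)$. The bracket formula of \ref{norm alg brack}, $[ue,ve]=a_i(p-q)uv\,e$, holds for local sections and is compatible with restriction, so it holds for global sections as well. Hence $\mathfrak{n}(D_i)\simeq R(D_i,\iota_i^*L)^{a_i}_{\mathrm{Eul}}$. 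We need $\kappa=a_i\neq0$. This holds because $D_i$ is a component of the support of $A$, so $a_i>0$.

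\textbf{Checking the hypotheses of Lemma \ref{euler algebra finite generation}.} The restriction $\iota_i^*L$ of an ample bundle to the closed subvariety $D_i$ is ample, and $D_i$ is projective. By the cited Stacks result, $R=R(D_i,\iota_i^*L)$ is a finitely generated graded $\mathbf C$-algebra. Its degree zero part $R_0=H^0(D_i,\OO_{D_i})$ is finite dimensional because $D_i$ is projective. Lemma \ref{euler algebra finite generation} then gives persistent finite generation of $\mathfrak{n}(D_i)$.

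\textbf{The modules.} In the same way, $\mathfrak{m}_{\lambda,\mu}(D_i,F)=\bigoplus_n H^0(D_i,F\otimes\iota_i^*L^{\otimes n})=M(D_i,F,\iota_i^*L)$ as a graded vector space. The $R$-module structure is multiplication of sections. The defining formula $(ue)\cdot w=(\lambda+\mu p-a_in)uw$ is local and so passes to global sections. It coincides with the action defining $M^{a_i}_{\lambda,\mu}$. By the Stacks reference, $M$ is a finitely generated graded $R$-module. Lemma \ref{eul mod fin gen} then yields persistent finite generation of the module.

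\textbf{Main obstacle.} There is no real difficulty here. The only points needing care are bookkeeping: verifying that the sheaf-level brackets and actions agree, on global sections, with the algebraic Euler constructions, and confirming that $a_i\neq 0$ so that $\kappa\in\mathbf C^*$.
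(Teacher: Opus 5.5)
Your proposal is correct and follows the paper's own proof. Like the paper, you identify $\mathfrak{n}(D_i)\simeq R(D_i,\iota_i^*L)^{a_i}_{\mathrm{Eul}}$ via $u\mapsto ue$ and $\mathfrak{m}_{\lambda,\mu}(D_i,F)=M(D_i,F,\iota_i^*L)^{a_i}_{\lambda,\mu}$, and then apply Lemmas \ref{euler algebra finite generation} and \ref{eul mod fin gen}. You also make explicit several things the paper leaves implicit: $a_i>0$ (so that $\kappa\in\mathbf C^*$, since $A$ has support all of $D$), finite dimensionality of $R_0$, and ampleness of $\iota_i^*L$.
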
 \begin{proof} We claim there are identifications: $$R(D_i,\iota_i^*L)_{\mathrm{Eul}}^{a_i}\simeq\mathfrak{n}(D_i),$$ $$M(D_i,F,\iota_i^*L)_{\lambda,\mu}^{a_i}=\mathfrak{m}_{\lambda,\mu}(D_i,F).$$ Indeed, the map $u\mapsto ue$ supplies the first and the second follows from the definition of the actions. The claim then follows from the preceding lemmas.
\end{proof}

\begin{corollary}\label{symb finite} If $L$ is ample, the graded Lie algebra $\mathfrak{s}(D_i)$ is persistently finitely generated. \end{corollary}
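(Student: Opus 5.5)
We use the extension of Lie algebras with ideal $\mathfrak{n}(D_i)$ and apply Lemma \ref{tech} to each tail. Fix $N\geq0$. Applying $H^0(D_i,-)$ to
$$0\to\mathcal{N}(D_i)\to\mathcal{S}(D_i)\to\mathcal{T}(D_i)\to0$$
gives a left exact sequence $0\to\mathfrak{n}(D_i)\to\mathfrak{s}(D_i)\to\mathfrak{t}(D_i)$. By Serre vanishing for the ample $\iota_i^*L$, the connecting map $\mathfrak{t}(D_i)_n\to H^1(D_i,(\iota_i^*L)^{\otimes n})$ is zero for $n\geq n_0$. We enlarge $N$ to be at least $n_0$, which is harmless because each graded piece is finite dimensional; see the Remark following the definition of persistent finite generation. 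Then
$$0\to\mathfrak{n}(D_i)_{\geq N}\to\mathfrak{s}(D_i)_{\geq N}\to\mathfrak{t}(D_i)_{\geq N}\to0$$
is exact.

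The subalgebra $\mathfrak{n}(D_i)_{\geq N}$ is an ideal, since $\mathcal{N}(D_i)$ is an ideal sheaf of $\mathcal{S}(D_i)$. The formula $[a\otimes s^n, ue]=\ldots$ only produces multiples of $e$, because $e$ is central and the remaining terms are $a(u)e$. The difficulty is that the quotient $\mathfrak{t}(D_i)_{\geq N}$ is not a Lie algebra on which we already have control. As written, $\mathcal{T}(D_i)$ is just a module. Its bracket is the symbol bracket for $T_{D_i}(\log\Delta_i)\otimes(\iota_i^*L)^{\bullet}$, which is a symbol algebra of the same kind one dimension lower. So the plan is to swap roles.

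We take $\mathfrak{K}=\mathfrak{n}(D_i)_{\geq N}$ and $\mathfrak{h}=\mathfrak{t}(D_i)_{\geq N}$ in Lemma \ref{tech}, with the trivial one-step filtration on $\mathfrak{K}$. We need $\mathfrak{h}$ finitely generated, and we need $\mathfrak{K}$ to be of finite type over some finitely generated quotient $\mathfrak{s}$ through which the action factors.

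This reversed arrangement seems awkward, and I expect it to be the main obstacle. The cleaner route is to generate directly and avoid the quotient being finitely generated as a Lie algebra. We pick generators of $\mathfrak{n}(D_i)_{\geq N}$ by Proposition \ref{norm fin gen}. We pick finitely many elements of $\mathfrak{s}(D_i)_{\geq N}$ lifting generators of $\mathfrak{t}(D_i)_{\geq N}$ as an $\mathfrak{n}(D_i)_{\geq N}$-module. This module is $\mathfrak{m}_{0,0}(T_{D_i}(\log\Delta_i))_{\geq N}$ by Lemma \ref{tgt term}, so it is finitely generated by Proposition \ref{norm fin gen} applied with $F=T_{D_i}(\log\Delta_i)$.

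Let $\mathfrak{l}$ be the subalgebra generated by both finite sets. We show $\mathfrak{l}=\mathfrak{s}(D_i)_{\geq N}$.

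First, $\mathfrak{l}$ contains $\mathfrak{n}(D_i)_{\geq N}$, since it contains that algebra's generators.

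Second, the image of $\mathfrak{l}$ in $\mathfrak{t}(D_i)_{\geq N}$ is an $\mathfrak{n}(D_i)_{\geq N}$-submodule. The action on the quotient is induced by bracketing lifts with elements of $\mathfrak{n}\subset\mathfrak{l}$, so the image is stable. It also contains the module generators, so it is everything.

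Third, take any $x\in\mathfrak{s}(D_i)_{\geq N}$. Some $y\in\mathfrak{l}$ has the same image in $\mathfrak{t}(D_i)_{\geq N}$, so $x-y\in\mathfrak{n}(D_i)_{\geq N}\subset\mathfrak{l}$, and hence $x\in\mathfrak{l}$.

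This avoids needing $\mathfrak{t}$ to be a Lie algebra at all. The only real checks are the following.

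The first check is exactness of the truncated global sections sequence. This is the genuine input, via Serre vanishing and finite dimensionality of the low degrees; the latter comes from $H^0$ of coherent sheaves on a projective variety.

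The second check is that the $\mathfrak{n}$-module structure on $\mathfrak{t}(D_i)$ coming from Lemma \ref{tgt term} agrees with the one induced by the bracket on $\mathfrak{s}(D_i)$. This holds by construction in that lemma's proof.
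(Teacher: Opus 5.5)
Your final three-step argument is correct and is the paper's proof, spelled out. You truncate far enough that Serre vanishing gives $0\to\mathfrak{n}(D_i)_{\geq N}\to\mathfrak{s}(D_i)_{\geq N}\to\mathfrak{t}(D_i)_{\geq N}\to0$. You then combine Lie generators of $\mathfrak{n}(D_i)_{\geq N}$ with lifts of module generators of $\mathfrak{t}(D_i)_{\geq N}\simeq\mathfrak{m}_{0,0}(T_{D_i}(\log\Delta_i))_{\geq N}$, both supplied by Proposition \ref{norm fin gen} and Lemma \ref{tgt term}. The paper packages this last step as Lemma \ref{gr finite} applied to the two-step filtration $\mathfrak{n}(D_i)_{\geq N}\subset\mathfrak{s}(D_i)_{\geq N}$.

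You should delete the preceding detour, because its key claim is false. $\mathfrak{n}(D_i)$ is only a subalgebra, not an ideal. The section $e$ is central in $\iota_i^*T_X(\log D)$, but it acts on $\mathcal{S}(D_i)_n$ by $-a_in$ (Remark \ref{e wts}). Hence $[a\otimes s^n,ue]$ contains the term $na_i\,a\otimes s^nu$, which is not a multiple of $e$. In particular $\mathcal{T}(D_i)$ inherits no bracket, so the ``swapped'' use of Lemma \ref{tech} was never available.

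Your final argument is unaffected. It only uses that $\mathfrak{n}(D_i)$ is a subalgebra, which is exactly what makes the projection $\mathfrak{s}(D_i)\to\mathfrak{t}(D_i)$ equivariant for $\mathfrak{n}(D_i)$.
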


\begin{proof} Tensor the normal-tangent sequence with $(\iota_i^*L)^{\otimes\bullet}$. Serre vanishing implies that for every sufficiently large $N$ we have a short exact sequence
$$0\to\mathfrak{n}(D_i)_{\geq N}\to\mathfrak{s}(D_i)_{\geq N}\to\mathfrak{t}(D_i)_{\geq N}\to0.$$
This is seen to be an exact sequence of modules for $\mathfrak{n}(D_i)_{\geq N}$, with quotient term identified with $\big(\mathfrak{m}_{0,0}(T_{D_i}(\log\Delta_i))\big)_{\geq N}$. Thus the normal subalgebra is finitely generated as a Lie algebra and the quotient is finitely generated as a module for it, and we are done (for example by lemma \ref{gr finite} in the case of length two filtration.)\end{proof}

\section{Induction on components of the boundary}

\subsubsection{Some orienting remarks} This section contains the main technical lemma on which we will rely in order to prove our main theorem (in the next section). The main difficulty arises from multiple irreducible components in the boundary of a compactification. In fact, if $U$ admits a smooth projective compactification $X$ with smooth ample boundary $D$, then the results proven above easily imply finite generation of $\mathrm{Vect}(U)$. 
Indeed we can take the filtration $$F^n\mathrm{Vect}(U)=H^0(X,T_X(\log D)(nD)),$$ then we have a morphism $\mathrm{Gr}^F\mathrm{Vect}(U)\to\mathfrak{s}(D)$ which is an equivalence in sufficiently high degrees. By lemma \ref{symb finite} we are done. \begin{remark}Note that admitting a compactification with smooth boundary is a very restrictive condition, with a basic obstruction coming from weights in Hodge theory. Indeed any non zero cohomology group, $\mathrm{Gr}_W^{i+j}H^i$, with $j\geq 2$ is an obstruction.\end{remark}The argument in the case of smooth boundary does not directly generalise. It made substantial use of the section $e$ of $\iota_D^*T_X(\log D)$, and the resulting normal subalgebra. For a boundary with more components, the normal-tangent sequence and its canonical section are available on each smooth component $D_i$, but \emph{do not} give such a sequence on the entire boundary. We are forced then to argue inductively, adding in one (thickened) component at a time.

\subsubsection{The proof of lemma \ref{q fin gen}}We now prove the main lemma of this section. As already mentioned, the elementary Lie theoretic mechanism which forms a part of the proof was recorded in lemma \ref{tech} and the reader should consult it during the reading of the following proof. It may help the reader to have in mind the correspondence between the notation of lemma \ref{tech} and the proof of lemma \ref{q fin gen}, given schematically as $$\mathcal{S}(B)\leftrightarrow\g,\, \mathcal{S}(B')\leftrightarrow\mathfrak{h},\,\mathcal{K}\leftrightarrow\mathfrak{K}.$$ 

\begin{lemma}\label{q fin gen} If $A$ is ample then Lie algebra $\mathfrak{s}(A)$ is persistently finitely generated.
\end{lemma}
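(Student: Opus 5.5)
We argue by induction on the effective subdivisor $B\leq A$ (ordered by total multiplicity $\sum b_j$), proving the stronger statement that $\mathfrak{s}(B)=H^0(X,\mathcal{S}(B))$ is persistently finitely generated for every effective $0<B\leq A$. Here $\mathcal{S}(B)=\iota_{B,*}\iota_B^*(T_X(\log D)\otimes L^{\otimes\bullet})$ with $L=\OO_X(A)$ still ample. The base cases are $B=D_i$, which is Corollary \ref{symb finite}.

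For the inductive step, write $B=B'+D_i$ with $B'\geq 0$ effective and $B'\neq 0$. Pull back along the exact sequence
$$0\to\OO_{D_i}(-B')\to\OO_B\to\OO_{B'}\to0,$$
tensored with $T_X(\log D)\otimes L^{\otimes\bullet}$. This gives a surjection of sheaves of Lie algebras $\mathcal{S}(B)\to\mathcal{S}(B')$ with kernel
$$\mathcal{K}=\iota_{i,*}\big(\iota_i^*T_X(\log D)\otimes\OO_{D_i}(-B')\otimes(\iota_i^*L)^{\otimes\bullet}\big).$$
Since $L$ is ample, Serre vanishing for the finitely many coherent sheaves involved yields, for $N\gg0$, a short exact sequence of graded Lie algebras
$$0\to H^0(\mathcal{K})_{\geq N}\to\mathfrak{s}(B)_{\geq N}\to\mathfrak{s}(B')_{\geq N}\to0.$$
By induction the quotient is finitely generated. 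Because all graded pieces are finite dimensional, the Remark after the definition of persistent finite generation lets us reduce to proving finite generation of $\mathfrak{s}(B)_{\geq N}$ for all large $N$.

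Next we analyse $\mathcal{K}$ as an $\mathcal{S}(B)$-module. We filter it by the normal-tangent sequence on $D_i$, with $\mathcal{K}_1=\OO_{D_i}(-B')\otimes L^\bullet e$ and $\mathcal{K}/\mathcal{K}_1=T_{D_i}(\log\Delta_i)(-B')\otimes L^\bullet$. The key local computation is that the $\mathcal{S}(B)$-action on each graded piece factors through the restriction $\mathcal{S}(B)\to\mathcal{S}(D_i)$ and then through $\mathcal{N}(D_i)$ acting as an $\mathcal{M}_{\lambda,\mu}$-module. To see this, note that sections vanishing on $D_i$ act on $\mathcal{K}$ with image in $I_{D_i}\mathcal{K}=0$ (using the Leibniz formula of Definition \ref{Q(B) bracket def}), and that the relevant brackets are computed in coordinates as in Remark \ref{coord}. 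The element $ue$, with $e=z_i\partial_i$ locally, acts on $w\in\mathcal{K}$ of weight $n$ by a scalar $(\lambda+\mu p-a_in)uw$. The twist by $\OO(-B')$ contributes through $z_i\partial_i(z^{B'})=b'_i z^{B'}$, giving $\lambda=b_i'$ (up to sign) and $\mu=0$. The non-$e$ part of the action, coming from $T_{D_i}(\log\Delta_i)$, acts on $\mathcal{K}/\mathcal{K}_1$ genuinely, but that action factors through $\mathfrak{s}(D_i)$. So we take $\mathfrak{s}=\mathfrak{s}(D_i)_{\geq N}$, which is finitely generated by Corollary \ref{symb finite}. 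By Serre vanishing again, $\mathfrak{s}(B)_{\geq N}\to\mathfrak{s}(D_i)_{\geq N}$ is surjective for large $N$.

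Finally, each graded piece of $H^0(\mathcal{K})_{\geq N}$ has the form $\mathfrak{m}_{\lambda,\mu}(D_i,F)_{\geq N}$ for a coherent sheaf $F$ on $D_i$ (namely $\OO_{D_i}(-B')$ or $T_{D_i}(\log\Delta_i)(-B')$). After another Serre vanishing step to keep the filtration exact on global sections, each piece is a finitely generated $\mathfrak{n}(D_i)_{\geq N}$-module by Proposition \ref{norm fin gen}, hence finitely generated over $\mathfrak{s}(D_i)_{\geq N}$. Lemma \ref{tech} then gives finite generation of $\mathfrak{s}(B)_{\geq N}$.

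The main obstacle is the verification that the action on the kernel factors through $\mathfrak{s}(D_i)$ with the correct Euler-type scalars. In particular, $\mathcal{K}$ itself is not a module over $\mathcal{S}(D_i)$ unless one passes to the graded pieces of the filtration, and one must check that the twist by $\OO(-B')$ really produces an $\mathcal{M}_{\lambda,\mu}$-structure. One also has to handle the bookkeeping with $\mathcal{S}(B)$ for non-reduced $B$, where $e$ is not defined on all of $B$. I would first check this in local coordinates with $B=2D_i$, then in general.
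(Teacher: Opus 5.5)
\textbf{Verdict: there is a genuine but easily repaired gap.} Your skeleton is the paper's. Peeling off one copy of $D_i$ at a time just unrolls the paper's filtration $\mathcal{J}_c$ of the kernel of $\mathcal{S}(B'+a_iD_i)\to\mathcal{S}(B')$, and your $\lambda=b'_i$ plays the role of the paper's $c$. The Serre vanishing steps, Corollary \ref{symb finite}, Proposition \ref{norm fin gen} and Lemma \ref{tech} then enter as in the paper.

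\textbf{Where it fails.} You feed the normal--tangent filtration $\mathcal{K}\supset\mathcal{K}_1\supset 0$ into Lemma \ref{tech}, but $\mathcal{K}_1=\OO_{D_i}(-B')\otimes(\iota_i^*L)^{\otimes\bullet}e$ is not an $\mathcal{S}(B)$-submodule. Already $\mathcal{N}(D_i)$ is not an ideal of $\mathcal{S}(D_i)$: Remark \ref{e wts} gives $[vs^n,e]=a_in\,vs^n$. Concretely, for $g\in\OO_X(-B')$ and $v$ logarithmic,
\[
[vz^{-nA},\,gz^{-mA}e]=z^{-nA}v(gz^{-mA})\,e+na_i\,g\,z^{-(n+m)A}\,v+g\,z^{-(n+m)A}[v,e].
\]
The last term dies in $\mathcal{K}$, since $[v,e]\in\OO_X(-D_i)T_X(\log D)$. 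The middle term, however, is a nonzero class in $\mathcal{K}/\mathcal{K}_1$ when $n\geq 1$ and $v$ is tangent to $D_i$ (e.g.\ $v=\partial_w$ in coordinates with $D_i=\{z=0\}$). So $\mathcal{K}_1$ and $\mathcal{K}/\mathcal{K}_1$ are modules only for $\mathcal{N}(D_i)$, not for $\mathcal{S}(D_i)$. The inference ``finitely generated over $\mathfrak{n}(D_i)_{\geq N}$, hence over $\mathfrak{s}(D_i)_{\geq N}$'' therefore has no meaning for these pieces. Your closing remark, that $\mathcal{K}$ is not an $\mathcal{S}(D_i)$-module until one passes to graded pieces, is exactly backwards.

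\textbf{The repair.} It uses only what you already have.
\begin{itemize}
\item Because $B=B'+D_i$, your own observation applies: $\OO_X(-D_i)T_X(\log D)$ brackets $\OO_X(-B')T_X(\log D)$ into $\OO_X(-B)T_X(\log D)$. Hence $\mathcal{K}$ itself is an $\mathcal{S}(D_i)$-module.
\item Apply Lemma \ref{tech} with the one-step filtration $\mathfrak{K}=H^0(\mathcal{K})_{\geq N}\supset 0$.
\item Use the normal--tangent sequence only as an exact sequence of $\mathfrak{n}(D_i)_{\geq N}$-modules. It is one, since $[fe,ge]=(fe(g)-ge(f))e$ is normal, and it stays exact on $H^0$ in degrees $\geq N$ by Serre vanishing.
\item Its end terms are finitely generated by Proposition \ref{norm fin gen}. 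So $\mathfrak{K}$ is finitely generated over $\mathfrak{n}(D_i)_{\geq N}\subset\mathfrak{s}(D_i)_{\geq N}$, which is what Lemma \ref{tech} needs.
\end{itemize}
This is exactly how the paper treats each $\mathrm{Gr}_c\mathcal{K}$. With this fix your version needs Lemma \ref{tech} only for a length-one filtration, which is slightly cleaner than the paper's length-$a_i$ filtration.

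\textbf{Minor points.}
\begin{itemize}
\item The scalars are exactly $\lambda=b'_i$, with no sign ambiguity. The normal sub-piece is $\mathcal{M}_{b'_i,a_i}$, because the term $-g\,e(f)$ contributes $+ma_i$. Only the tangential quotient is $\mathcal{M}_{b'_i,0}$. Your $\mu=0$ is therefore wrong for $\mathcal{K}_1$, though harmless, since Proposition \ref{norm fin gen} covers all $\lambda,\mu$.
\item Non-reducedness of $B$ is not an obstacle: $e$ is only ever needed on $\mathcal{K}$, which is an $\OO_{D_i}$-module.
\end{itemize}
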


Most of the lemma is subsumed in the following filtration argument at the sheaf level, which we prove first as a preliminary lemma. In the following we let $B'< B\leq A$ be effective subdivisors satisfying $B=B'+a_iD_i$, so that in particular $D_i$ does not appear in $B'$.

\begin{lemma}\label{prelim} The kernel $\mathcal{K}_0$ of the restriction map $\mathcal{S}(B)\to\mathcal{S}(B')$ has a finite length filtration by $\mathcal{S}(B)$-submodules $\mathcal{K}_0\supset\mathcal{K}_1\supset\cdots\supset\mathcal{K}_{a_i}=0$ so that the following two properties are satisfied:
\begin{itemize}
\item The action of $\mathcal{S}(B)$ on the associated graded $\mathrm{Gr}\mathcal{K}$ factors through the restriction morphism $\mathcal{S}(B)\to\mathcal{S}(D_i)$.
\item Considered as modules for the normal subalgebra $\mathcal{N}(D_i)$ and setting $F_c=\iota_i^*\OO_X(-B'-cD_i)$, the associated graded pieces sit in short exact sequences
$$0\to\iota_{i,*}\mathcal{M}_{c,a_i}(F_c)\to\mathrm{Gr}_c\mathcal{K}\to\iota_{i,*}\mathcal{M}_{c,0}(F_c\otimes T_{D_i}(\log\Delta_i))\to0.$$
\end{itemize}
\end{lemma}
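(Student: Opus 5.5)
The plan is to build the filtration from powers of the ideal of $D_i$ and then check both properties in the local coordinates of Remark \ref{coord}.

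\textbf{The filtration.} Write $E=T_X(\log D)\otimes L^{\otimes\bullet}$. Then $\mathcal{S}(B)=E\otimes\OO_B$, and the kernel of restriction to $B'$ is
$$\mathcal{K}_0=E\otimes I_{B'}/I_B .$$
For $0\leq c\leq a_i$ set
$$\mathcal{K}_c:=E\otimes\big(I_{B'}I_i^{c}/I_B\big),$$
so that $\mathcal{K}_{a_i}=0$ because $I_{B'}I_i^{a_i}=I_B$. Locally $\mathcal{K}_c$ consists of sections $z^{B'}z_i^c(\ast)$ taken modulo $z^B$.

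\textbf{Submodule property.} Log vector fields preserve every ideal $I_j$, and hence every monomial ideal $I_{B'}I_i^c$. From the bracket formula of Definition \ref{Q(B) bracket def}, bracketing against a local section $vs^n$ either differentiates the coefficient by a log field, which keeps it in $I_{B'}I_i^c$, or multiplies it by a function. So each $\mathcal{K}_c$ is an $\mathcal{S}(B)$-submodule.

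\textbf{Action on the graded pieces.} We have
$$\mathrm{Gr}_c\mathcal{K}=E\otimes I_{B'}I_i^c/I_{B'}I_i^{c+1}\simeq \iota_{i,*}\big(\iota_i^*E\otimes F_c\big),$$
using $I_{B'}I_i^c/I_{B'}I_i^{c+1}\simeq \iota_i^*\OO_X(-B'-cD_i)=F_c$. To see that the action factors through $\mathcal{S}(D_i)$, check that an element of $\mathcal{S}(B)$ vanishing on $D_i$ acts by zero. Such an element is $z_i\cdot(\text{log field})s^n$. In the bracket, the term where the other field differentiates $z_i$ produces a multiple of $z_i$, since log fields preserve $I_i$. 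Every other term carries a factor $z_i$ multiplying an element of $I_{B'}I_i^c$. In all cases the result lands in $\mathcal{K}_{c+1}$. This gives the first bullet.

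\textbf{The short exact sequences.} Tensor the normal--tangent sequence with $F_c\otimes L^{\otimes\bullet}$; this stays exact since $F_c$ is a line bundle on $D_i$. It remains to compute how $ue$ acts, for $u\in (\iota_i^*L)^{\otimes p}$, on an element $w=z^{B'}z_i^c\,f\,v\,t^n$ of $\mathrm{Gr}_c\mathcal{K}$.

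Expanding the bracket, $[ue,w]$ picks up the following contributions:
\begin{itemize}
\item $e=z_i\partial_i$ acting on the coefficient $z^{B'}z_i^c$, which gives $c$, since the $z_i$-exponent of $z^{B'}$ is zero;
\item $e$ acting on $t^n$, which gives $-a_in$;
\item $[e,v]$, which vanishes modulo $z_i$;
\item the term $-v(u)\,e\cdot(\text{coefficient})$, coming from $w$ differentiating $u$.
\end{itemize}

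The last term vanishes on the quotient $T_{D_i}(\log\Delta_i)$ piece. This matches $\mathcal{M}_{c,0}$, with $\lambda=c$, $\mu=0$ and the $-a_in$ term.

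On the sub piece $w=fu'e$, the extra term is the Euler-type contribution $a_ip$ computed in \S\ref{norm alg brack}. So $\lambda=c$ and $\mu=a_i$, giving $\mathcal{M}_{c,a_i}(F_c)$.

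\textbf{Main obstacle.} The delicate step is exactly this bookkeeping: getting the signs and coefficients right. That means verifying carefully that $e$ acting on the coefficient $z^{B'}z_i^c$ contributes exactly $+c$, and that $u$ differentiated by $w$ contributes $\mu p$ with $\mu=a_i$ on the normal part and $\mu=0$ on the tangent part. Both checks should be done in the local coordinates of Remark \ref{coord}, where the computation is explicit.
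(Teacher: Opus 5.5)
Your proposal is correct and follows essentially the same route as the paper: the same filtration $\mathcal{K}_c=(I_{B'}I_i^c/I_B)\otimes T_X(\log D)\otimes L^{\otimes\bullet}$, the same ideal-preservation argument for the factorization through $\mathcal{S}(D_i)$, and the same local bracket computation. That computation yields the coefficient $(c+a_ip-a_in)$ on the normal sub-piece and $(c-a_in)$ on the tangent quotient. The one step you leave implicit, and which the paper states, is that $e$ applied to the regular coefficients (your $f$, and the coefficient of $u$) lands in $I_i$ and hence in $\mathcal{K}_{c+1}$, so those terms vanish in $\mathrm{Gr}_c$.
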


\begin{proof} We will construct the filtration globally and then verify its properties locally.

On locally free sheaves on $X$, the kernel of the restriction morphism $\iota_{B,*}\iota_B^*\to\iota_{B',*}\iota_{B'}^*$ is given by tensor product with $$\mathcal{J}_0:=\OO_X(-B')/\OO_X(-B).$$ This has the finite filtration $\mathcal{J}_c:=\OO_X(-B'-cD_i)/\OO_X(-B)$, with associated graded pieces $\mathrm{Gr}_c\mathcal{J}\simeq\iota_{i,*}\iota_i^*\OO_X(-B'-cD_i)=\iota_{i,*}F_c$. Tensoring with $T_X(\log D)\otimes L^{\otimes\bullet}$ gives the required filtration of the kernel, namely $\mathcal{K}_c:=\mathcal{J}_c\otimes T_X(\log D)\otimes L^{\otimes\bullet}$. The projection formula identifies the associated graded pieces as
$$\mathrm{Gr}_c\mathcal{K}\simeq\iota_{i,*}\big(F_c\otimes\iota_i^*T_X(\log D)\otimes(\iota_i^*L)^{\otimes\bullet}\big).$$
The action factors through restriction to $\mathcal{S}(D_i)$ because
$$\big[\OO_X(-D_i)T_X(\log D)\otimes L^{\otimes p},\OO_X(-B'-cD_i)T_X(\log D)\otimes L^{\otimes n}\big]$$
$$\subset\OO_X(-B'-(c+1)D_i)T_X(\log D)\otimes L^{\otimes(p+n)}.$$
Indeed logarithmic vector fields preserve both the ideal sheaf factors.

We turn to the second assertion. On $D_i$ we tensor the normal-tangent sequence with $F_c\otimes(\iota_i^*L)^{\otimes\bullet}$ to give an exact sequence,
$$0\to F_c\otimes(\iota_i^*L)^{\otimes\bullet}e\to F_c\otimes\iota_i^*T_X(\log D)\otimes(\iota_i^*L)^{\otimes\bullet}\to$$ $$\to F_c\otimes T_{D_i}(\log\Delta_i)\otimes(\iota_i^*L)^{\otimes\bullet}\to 0.$$ After pushing forward to $X$ this supplies the relevant exact sequence at the level of the underlying sheaves. 

Now the remaining claims can be proven locally, with respect to coordinates as in remark \ref{coord}. A local section $w$ of $F_c\otimes\iota_i^*L^{\otimes n}e$, of weight $n$, lifts to a section on $X$ of the form $$\widetilde{w}:=z^{B'}z_i^{c}w_0(z) z^{-nA}e,\,\ (w_0(z)\,\,\mathrm{regular}),$$ and a weight $m$ section, $u$, of the normal subalgebra lifts to some $$\widetilde{u}=u_0(z) z^{-mA}e,\,\, (u_0(z)\,\,\mathrm{regular}).$$ Their bracket can be expanded via the Leibniz rule to give $$[\widetilde{u},\widetilde{w}]=[u_0(z)z^{-mA}e,z^{B'}z_i^cw_0(z)z^{-nA}e]$$ $$=(c-a_in+a_im)z^{B'}z_i^cu_0(z)w_0(z)z^{-(n+m)A}e$$ $$+z^{B'}z_i^{c+1}\OO z^{-(n+m)A}e,$$ where the terms of order $z_i^{c+1}$ are the result of both $e(u_0(z))$ and $e(w_0(z))$ being divisible by $z_i$, since $u_0(z)$ and $w_0(z)$ are both regular. The terms of order $z^Bz_i^{c+1}$ dissappear in the associated graded and we deduce that the action is given by $$u\cdot w=(c-a_in+a_im)uw.$$ The kernel term is then a submodule isomorphic to $\mathcal{M}_{c,a_i}(F_c)$, exactly as desired. The cokernel term is dispensed of similarly, let $$\widetilde{w}:=z^{B'}z_i^cw_0(z)vz^{-nA},\,\, (w_0(z)\,\,\mathrm{regular},v\in T_{X}(\log D)),$$ be a lift to $X$ of a weight $n$ local section, $w$, of the extension term,  $$F_c\otimes\iota_i^*T_X(\log D)\otimes\iota_i^*L^{\otimes n}.$$ Then we just compute directly that $$[\widetilde{u},\widetilde{w}]=[u_0(z)z^{-mA}e,z^{B'}z_i^cw_0(z)vz^{-nA}]$$ $$=(c-a_in)z^{B}z_i^cu_0(z)w_0(z)vz^{-(n+m)A}$$ $$+u_0(z)w_0(z)[e,v]z^{-(n+m)A}-v(u_0(z))z^{B'}z_i^cw_0(z)ez^{-(n+m)A}.$$ The second term vanishes upon restriction to $D_i$ as $e$ is central in $T_{D_i}(\log\Delta_i)$, the third term vanishes in the quotient by the normal subspace and in the first term we recognize exactly the module $$\mathcal{M}_{c,0}(F_c\otimes T_{D_i}(\log\Delta_i)).$$
\end{proof}

\begin{proof} (\emph{of lemma \ref{q fin gen}}.) We prove by induction on the number of components that $\mathfrak{s}(B)$ is persistently finitely generated whenever $B=\sum_{j\in J}a_jD_j$ for a subset $J\subset I$. The base case, corresponding to the empty divisor, is trivial. Suppose that $B=B'+a_iD_i$ with $D_i$ not a component of $B'$, and assume by induction that $\mathfrak{s}(B')$ is persistently finitely generated. With notation as in lemma \ref{prelim} we set $\mathfrak{K}_c:=H^0(X,\mathcal{K}_c)$ and $\mathfrak{K}:=\mathfrak{K}_0$. 

We may choose $N\gg 0$ so that we have an exact sequence
$$0\to\mathfrak{K}_{\geq N}\to\mathfrak{s}(B)_{\geq N}\to\mathfrak{s}(B')_{\geq N}\to0,$$ a surjection $\mathfrak{s}(B)_{\geq N}\to\mathfrak{s}(D_i)_{\geq N}$ and an identification of associated gradeds $$H^0(X,\mathrm{Gr}_c\mathcal{K})_{\geq N}\simeq \mathrm{Gr}_c\mathfrak{K}_{\geq N}.$$ Indeed this follows from Serre vanishing and ampleness of $A$. The action of $\mathfrak{s}(B)$ on these associated gradeds factors through $\mathfrak{s}(D_i)_{\geq N}=\mathfrak{s}(D_i)$, which is finitely generated by lemma \ref{symb finite}. Considered as modules for the normal subalgebra, they fit into exact sequences
$$0\to\mathfrak{m}_{c,a_i}(F_c)_{\geq N}\to\mathrm{Gr}_c\mathfrak{K}_{\geq N}\to\mathfrak{m}_{c,0}(F_c\otimes T_{D_i}(\log\Delta_i))_{\geq N}\to0.$$
Proposition \ref{norm fin gen} implies that both end terms are finitely generated for $\mathfrak{n}(D_i)_{\geq N}$. Hence so is the middle term, and therefore it is finitely generated as a module for $\mathfrak{s}(D_i)_{\geq N}$. We can now apply lemma \ref{tech}, with notational substitutions as follows; $$\g\leftrightarrow\mathfrak{s}(B)_{\geq N},$$ $$\mathfrak{h}\leftrightarrow\mathfrak{s}(B')_{\geq N},$$ $$\mathfrak{s}\leftrightarrow\mathfrak{s}(D_i)_{\geq N}$$ and filtration giveb by  $\mathfrak{K}_{c,\geq N}$. Finite generation of $\mathfrak{h}$ is the inductive hypothesis, so the lemma gives finite generation of $\mathfrak{s}(B)_{\geq N}$, from which the claim follows easily. \end{proof}

\section{Proving the main theorem}

We are going to prove our main theorem in this section. This will require some fairly standard algebraic geometry, notably we will rely on resolution of singularities. Below when we say that $D$ \emph{supports} a divisor $C$ we mean that the support of $C$ is \emph{all of} $D$. 
\begin{lemma}\label{blowups} Let $U$ be a smooth affine variety over $\mathbf C$. Then there exists a smooth projective compactification $o:U\to X$ such that the boundary $D=X\setminus U$ is a simple normal crossings divisor and supports an effective ample divisor $A=\sum_i a_iD_i.$
\end{lemma}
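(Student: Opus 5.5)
Since $U$ is affine, I will first choose a closed embedding $U\subset\aff^m\subset\mathbf{P}^m$ and take $\overline{U}$, the closure of $U$ in $\mathbf{P}^m$. This is a projective variety in which $U$ is open, and whose boundary $\overline{U}\setminus U=\overline{U}\cap H_\infty$ is the support of a hyperplane section. In particular the boundary is the support of an effective Cartier divisor, namely the restriction of $H_\infty$, and this divisor is ample. Note also that the complement of an affine open subset in a normal or projective variety always has pure codimension one; this is the key place where affineness of $U$ enters.

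Next I will apply Hironaka's resolution of singularities together with embedded resolution of the boundary. There is a proper birational morphism $\pi:X\to\overline{U}$, obtained as a sequence of blowups with centres lying over the singular locus of $\overline{U}$ and over the boundary, hence disjoint from $U$ since $U$ is smooth. It is an isomorphism over $U$, $X$ is smooth projective, and $D=\pi^{-1}(\overline{U}\setminus U)_{\mathrm{red}}$ is an SNC divisor. The pullback $\pi^*H_\infty$ is an effective Cartier divisor with support exactly $D$, but it is only nef and big, not ample.

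The main step, and the expected obstacle, is to upgrade this to an ample divisor supported on all of $D$. I will use the standard fact that a blowup $\pi:X=\mathrm{Bl}_Z Y\to Y$ of a projective $Y$, with $Z$ a closed subscheme, has relatively ample $\OO_X(-E)$, where $E$ is the exceptional Cartier divisor. Hence, if $H$ is ample on $Y$, then $k\pi^*H-E$ is ample on $X$ for $k\gg0$. Inductively along the sequence of blowups, with each centre contained in the preimage of the boundary, I can produce an ample divisor of the form $k\pi^*H_\infty-\sum_j c_jE_j$, where the $E_j$ are the successive exceptional divisors pulled back to $X$, all supported in $D$. To be careful I would rather realise the whole resolution as a single blowup of an ideal sheaf $\mathcal{I}$ cosupported on the boundary; for example, a composite of blowups of projective varieties is projective over $\overline{U}$ and is the blowup of some ideal. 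That gives $\OO_X(-E)$ relatively ample with $\mathrm{Supp}(E)\subset D$. Then $A=k\pi^*H_\infty-E$ is ample for $k\gg0$.

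It remains to check that $A$ is effective with support exactly $D$. The coefficient of each component $D_i$ in $k\pi^*H_\infty$ is at least $k$, since every component of $D$ lies over the boundary, where $H_\infty$ vanishes. The coefficients of $E$ are fixed, so increasing $k$ makes every coefficient of $A$ strictly positive. The support of $A$ is then all of $D$, and ampleness is preserved because adding further multiples of the nef divisor $\pi^*H_\infty$ to an ample divisor keeps it ample. The delicate point is ensuring that the ideal $\mathcal{I}$ is cosupported exactly in the boundary. If needed, I will replace $\mathcal{I}$ by $\mathcal{I}\cdot\mathcal{I}_{\partial}$, which does not change the blowup's properties away from $U$, so that $E$ is nonzero only over the boundary and never meets $U$.
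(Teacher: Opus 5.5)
Your inductive route is essentially the paper's proof. You resolve $\overline{U}$ by blowups centred over the boundary, and at each stage you replace an ample $C$ by $k\pi^*C-E$, using that $\OO(-E)$ is $\pi$-ample.

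The real difference is where effectivity is secured. The paper keeps an effective ample divisor with support exactly the boundary at every stage. It notes that the centre lies scheme-theoretically in $rC$ for some $r$, so $r\pi^*C-E\geq 0$ and $m\pi^*C-E=(m-r)\pi^*C+(r\pi^*C-E)$. This works directly with Cartier divisors on the possibly singular intermediate varieties. You instead carry only ampleness through the tower and restore effectivity once, on the smooth $X$:
\begin{itemize}
\item $K\pi^*H_\infty-\sum_j c_j\widetilde{E}_j$ is a combination of the $D_i$, since each $\widetilde{E}_j$ lies over a centre inside the boundary;
\item every $D_i$ appears in $\pi^*H_\infty$ with coefficient at least $1$;
\item adding multiples of the base-point-free divisor $\pi^*H_\infty$ preserves ampleness.
\end{itemize}
That is correct and arguably a little cleaner.

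What does not work is the single-blowup variant you say you would prefer. Knowing that a projective birational morphism is the blowup of some ideal gives no control over where that ideal is cosupported. Your remedy cannot supply that control, because $V(\mathcal{I}\cdot\mathcal{I}_\partial)=V(\mathcal{I})\cup\partial$: multiplying by $\mathcal{I}_\partial$ only enlarges the cosupport. If $V(\mathcal{I})$ meets $U$, then $\mathcal{I}|_U$ is a nontrivial invertible ideal and $E$ has components meeting $U$. Along those components $k\pi^*H_\infty-E$ has negative coefficients, however large $k$ is. (If $\mathcal{I}_\partial=\OO(-H_\infty)$, the blowup is unchanged and $E$ simply becomes $E+\pi^*H_\infty$.)

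You have two ways to fix this. You can drop the detour and keep the inductive argument, which needs no such control. Alternatively, you can use the precise fact from the Stacks Project's section on blowing up: blowing up $Z$ via $b$ and then blowing up $Z'$ is the same as a single blowup in a centre equal set-theoretically to $Z\cup b(Z')$. In your situation that centre stays inside the boundary.

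A minor point: affineness of $U$ enters through the embedding in affine space, which makes the boundary a hyperplane section. The remark about purity of codimension one is not needed.
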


\begin{proof} Embed $U$ as a closed subvariety of $\mathbf A^N$ and take its projective closure $X_0\subset\mathbf P^N$. The hyperplane at infinity defines an effective ample Cartier divisor $H$ with complement $U$. By Hironaka's theorem, \cite{Hironaka}, there is a sequence of blow-ups supported over $H$ giving a smooth projective compactification $X$ of $U$ with simple normal crossings boundary.

It suffices then to show that the property that the boundary support an effective ample Cartier divisor is preserved under such blow-ups. To this end, let $\pi:V'\to V$ be one such blowup, with exceptional divisor $E$, and suppose that $C$ is an effective ample Cartier divisor whose support is $V\setminus U$. Now, $\OO_{V'}(-E)$ is $\pi$-ample by \cite[Tag 02NS]{Stacks} and then $m\pi^*C-E$ is ample for $m$ sufficiently large by \cite[Tag 0892]{Stacks}. Moreover, the center is contained scheme-theoretically in $rC$ for some $r>0$, so $r\pi^*C-E$ is effective. For $m>r$, the divisor $$m\pi^*C-E=(m-r)\pi^*C+(r\pi^*C-E)$$ is therefore effective with support exactly $V'\setminus U$. 

\end{proof}

\begin{theorem}\label{main} Let $U$ be a smooth affine variety over $\mathbf C$. Then $\mathrm{Vect}(U)=H^0(U,T_U)$ is finitely generated as a Lie algebra.
\end{theorem}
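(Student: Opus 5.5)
We combine the results already established. First, Lemma \ref{blowups} supplies a smooth projective compactification $o:U\to X$. Its boundary $D=\sum_i D_i$ is a simple normal crossings divisor, and it supports an effective ample divisor $A=\sum_i a_iD_i$ with all $a_i>0$. Since $U$ is affine and $D$ is the full complement, every vector field on $U$ extends as a rational section of $T_X(\log D)$ with poles only along $D$. Such a section is therefore a section of $T_X(\log D)(nA)$ for some $n$, because $A$ has support all of $D$. Hence
$$\mathrm{Vect}(U)=H^0(X,o_*T_U)=\clm_n H^0(X,T_X(\log D)(nA)).$$
We filter by $F^n\mathrm{Vect}(U):=H^0(X,T_X(\log D)(nA))$. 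By Lemma \ref{symbols} this is a Lie algebra filtration, and it is exhaustive by the previous remark. Each $F^n$ is finite dimensional, since $X$ is projective and the sheaf is coherent.

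Next we compare $\mathrm{Gr}^F\mathrm{Vect}(U)$ with $\mathfrak{s}(A)=H^0(X,\mathcal{S}(A))$. For $n\geq1$ we take global sections of the short exact sequence
$$0\to T_X(\log D)((n-1)A)\to T_X(\log D)(nA)\to \mathcal{S}(A)_n\to 0.$$
This gives an injection $\mathrm{Gr}^F_n\mathrm{Vect}(U)\hookrightarrow \mathfrak{s}(A)_n$. The injection is an isomorphism as soon as $H^1(X,T_X(\log D)((n-1)A))=0$, and Serre vanishing guarantees this for $n\gg0$ because $A$ is ample. By Lemma \ref{symbols}, these maps assemble into a morphism of graded Lie algebras $\mathrm{Gr}^F_\bullet\mathrm{Vect}(U)\to\mathfrak{s}(A)_\bullet$, at least in positive degrees. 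Degree zero is harmless, since we may compose with the truncation or replace $\mathfrak{s}(A)_0$ by $F^0$. The morphism is therefore an isomorphism on $_{\geq N}$ for all sufficiently large $N$.

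Finally, Lemma \ref{q fin gen} says that $\mathfrak{s}(A)$ is persistently finitely generated because $A$ is ample. Lemma \ref{gr pers fin} then applies, with $\mathfrak{g}=\mathrm{Vect}(U)$ and $\mathfrak{s}_\bullet=\mathfrak{s}(A)_\bullet$, and gives finite generation of $\mathrm{Vect}(U)$. The one point needing care is the degree-zero mismatch in the comparison morphism, since Lemma \ref{symbols} only gives an isomorphism in positive degrees. Lemma \ref{gr pers fin} only uses high truncations, so I would handle this by applying it directly to the tails, or by noting that the low-degree pieces are finite dimensional and get adjoined by hand in its proof. Everything else is a direct assembly of the earlier results.
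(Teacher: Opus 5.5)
Your proposal is correct and follows the paper's proof: compactify via Lemma \ref{blowups}, filter by $H^0(X,T_X(\log D)(nA))$, use Serre vanishing to identify $\mathrm{Gr}^F$ with $\mathfrak{s}(A)$ in high degrees, and conclude from Lemma \ref{q fin gen} and Lemma \ref{gr pers fin}. You also supply the routine details the paper leaves implicit (exhaustiveness, finite dimensionality, the injection $\mathrm{Gr}^F_n\hookrightarrow\mathfrak{s}(A)_n$), and your degree-zero worry is harmless: the map of Lemma \ref{symbols} is still a Lie morphism in degree zero (it is restriction to $A$), and Lemma \ref{gr pers fin} only needs an isomorphism on high truncations.
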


\begin{proof} Choose a compactification as in Lemma \ref{blowups}, and let us write $\g=H^0(U,T_U).$ Then as above we have the filtration $$F^n\g=H^0(X,T_X(\log D)(nA))).$$ Serre vanishing identifies the associated graded, in high degrees, with $\mathfrak{s}(A)$, which we have proven is persistently finitely generated in lemma \ref{q fin gen}. The main claim follows immediately.\end{proof}

Of course, along the way we have proven the following result, which we stress\footnote{Somewhat defensively, in light of the appendix...} is formally quite a bit stronger.

\begin{theorem}\label{real main} Let $X$ be smooth projective and let $D$ an SNC divisor, and $A$ an ample effective divisor whose support is $D$. Consider the filtered Lie algebra $$\mathfrak{g}=H^0(X,T_X(\log D)(\infty A)),\,\, F^n\g:=H^0(X,T_X(\log D)(nA)),$$  Then for any $N\geq 0$ the graded Lie algebra $\mathrm{Gr}_{\geq N}^F\g$ is finitely generated. \end{theorem}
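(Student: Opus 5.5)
The plan is to deduce Theorem \ref{real main} from Lemma \ref{q fin gen} together with Lemma \ref{symbols} and Serre vanishing. Nothing new is needed beyond one extra observation: the identification of the associated graded with the symbol algebra holds in all sufficiently high degrees. The low degrees are then absorbed by finite dimensionality.

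First, fix $N\geq 0$. By Lemma \ref{symbols}, for every $n\geq1$ there is a short exact sequence of sheaves
$$0\to T_X(\log D)((n-1)A)\to T_X(\log D)(nA)\to \iota_{A,*}\iota_A^*\big(T_X(\log D)\otimes L^{\otimes n}\big)\to 0,$$
and the induced map $\mathrm{Gr}^F_n\g\to \mathfrak{s}(A)_n$ on global sections is compatible with brackets. Since $A$ is ample and $T_X(\log D)$ is locally free, Serre vanishing gives an $n_0$ such that $H^1(X,T_X(\log D)((n-1)A))=0$ for all $n\geq n_0$. For such $n$ the map $\mathrm{Gr}^F_n\g\to\mathfrak{s}(A)_n$ is therefore an isomorphism. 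Injectivity is automatic, since $\mathrm{Gr}^F_n\g=F^n\g/F^{n-1}\g$ is exactly the image of $H^0$ of the middle term modulo $H^0$ of the first. So for every $M\geq \max(N,n_0)$ we get an isomorphism of graded Lie algebras
$$\mathrm{Gr}^F_{\geq M}\g\simeq\mathfrak{s}(A)_{\geq M}.$$

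Second, by Lemma \ref{q fin gen}, $\mathfrak{s}(A)$ is persistently finitely generated, so $\mathfrak{s}(A)_{\geq M}$ is finitely generated. Hence $\mathrm{Gr}^F_{\geq M}\g$ is finitely generated.

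Finally, we pass from $\mathrm{Gr}^F_{\geq M}\g$ down to $\mathrm{Gr}^F_{\geq N}\g$. Each $\mathrm{Gr}^F_n\g$ is finite dimensional, being a subquotient of $H^0$ of a coherent sheaf on a projective variety. Adjoin to the finite generating set of $\mathrm{Gr}^F_{\geq M}\g$ bases of $\mathrm{Gr}^F_n\g$ for $N\leq n<M$. The Lie subalgebra generated by these contains every homogeneous piece of $\mathrm{Gr}^F_{\geq N}\g$, so it is everything; this is the argument of the remark following the definition of persistent finite generation. Note that $\mathrm{Gr}^F_{\geq M}\g$ is a subalgebra of $\mathrm{Gr}^F_{\geq N}\g$, so its generators lie inside, and brackets land in degrees $\geq N$ since both degrees are nonnegative.

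The main obstacle has already been handled in Lemma \ref{q fin gen}. The only point needing care here is bracket compatibility of the identification in high degrees, and this is part of Lemma \ref{symbols}, because the map is induced by a morphism of sheaves of graded Lie algebras.
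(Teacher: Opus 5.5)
Your proposal is correct and is essentially the paper's argument: Lemma \ref{symbols} plus Serre vanishing identifies $\mathrm{Gr}^F_{\geq M}\g$ with $\mathfrak{s}(A)_{\geq M}$ for $M\gg0$, Lemma \ref{q fin gen} gives finite generation there, and the finite-dimensional pieces in degrees $N\leq n<M$ are adjoined by hand, as in Lemma \ref{gr pers fin}. You spell out more detail than the paper does (it only remarks that the result was proven along the way to Theorem \ref{main}), including the injectivity of $\mathrm{Gr}^F_n\g\to\mathfrak{s}(A)_n$ by left exactness of $H^0$.
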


\appendix

\section{Another proof}

What follows is due to ChatGPT 6 (Astra). It was prompted, out of self-doubt and morbid curiosity, by the author \emph{after} the writing of the main text. The proof which follows has been minimally rewritten by the author, with no additions of any mathematical substance. The main idea is very simple, we try to recover the $A$-linear structure on vector fields purely in terms of the adjoint action.   

\begin{remark} Notation is kept purely algebraic here, as there is almost no geometry in sight. It clashes a little with the notation of the main text, but this should not cause any confusion.\end{remark} 

\begin{theorem}\label{main-appendix}Let $A$ be a smooth finite type $k$-algebra, where the characteristic of $k$ is not $2$. Then $\g:=\mathrm{Der}_k(A)$ is finitely generated.\end{theorem}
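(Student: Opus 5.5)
The plan is to recover the $A$-module structure of $\g=\mathrm{Der}_k(A)$ from the Lie bracket alone. The basic identity is
$$[fD,E]+[D,fE]=2f[D,E]+D(f)E-E(f)D,\qquad f\in A,\ D,E\in\g,$$
which we solve for $f[D,E]$ after dividing by $2$. This is where the hypothesis on the characteristic enters.

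\emph{Setup.} Fix algebra generators $x_1,\dots,x_n$ of $A$ over $k$. Since $A$ is smooth of finite type, $\g$ is a finitely generated projective $A$-module, so we can fix $A$-module generators $D_1,\dots,D_m$ of $\g$. Filter $A$ by degree: $A_{\le d}$ is the image of polynomials of degree $\le d$ in the $x_i$. Choose a constant $c$ such that every $D_j(x_i)$ and every coefficient of every $[D_j,D_k]$, expressed in the $D_l$, lies in $A_{\le c}$. Let $L\subset\g$ be the Lie subalgebra generated by the finite set
$$\{fD_j:\ f\in A_{\le d_0}\},$$
where $d_0$ is a fixed constant that depends only on $c$; we choose it large in the last step.

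\emph{Reduction.} It suffices to show that $x_iL\subset L$ for every $i$. Then $L$ is an $A$-submodule containing the $D_j$, so $L=\g$. Even better, it suffices to show by induction on $d$ that $A_{\le d}D_j\subset L$ for all $j$ and all $d$. The case $d\le d_0$ holds by construction.

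\emph{Inductive step.} Let $f\in A_{\le d}$ and $g=x_i$. Apply the identity to $D=x_iD_k$ and $E=fD_j$, or more directly expand
$$[x_iD_k,fD_j]=x_iD_k(f)D_j-fD_j(x_i)D_k+x_if[D_k,D_j].$$
This expresses the target $x_ifD_j$, of degree $d+1$, through a bracket of elements already in $L$ together with correction terms. The correction terms have the form (degree $\le d+c$ function)$\cdot D_l$. The danger is exactly that these terms have \emph{higher} degree than the target. I expect this to be the main obstacle.

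To get around it I would avoid the naive degree filtration and use the symmetrized identity above with both $D$ and $E$ carrying multipliers. Concretely, I would compare $[fD,gE]$ with $[gD,fE]$. Their difference is
$$2\bigl(fD(g)E-gE(f)D\bigr)+\text{(antisymmetric terms)},$$
while the $fg[D,E]$ terms cancel. So one obtains the elements $fD(g)E$ modulo lower-complexity terms. Choosing $D$ among a ``coordinate'' family, for instance after localizing on a cover where $A$ is étale over a polynomial ring so that there are $\partial_i$ with $\partial_i(x_j)=\delta_{ij}$, makes $D(g)$ a constant or a unit. This lets the recursion close: $x_i\cdot(fE)$ is obtained from brackets of $x_i^2\partial_i$-type and $\partial_i$-type elements with $fE$, as in the Witt algebra computation in the introduction, where $L_{-1},L_1,L_2$ suffice.

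Globalization is then done with a partition-of-unity argument in $A$. Write $1=\sum h_\alpha s_\alpha$ with the $s_\alpha$ giving a cover by such étale charts, and put the finitely many elements $h_\alpha s_\alpha^N\partial_i^{(\alpha)}$, which clear the denominators, into the generating set. This globalization is the step I would check most carefully.
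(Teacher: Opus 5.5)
\textbf{There is a genuine gap.} You never establish the step everything hinges on: that $L$ is stable under multiplication by the $x_i$. Your first-order identity always leaves non-tensorial correction terms such as $E(f)D$, or $fD_j(x_i)D_k$, and you correctly flag this. The proposed repair does not remove them.

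\emph{The symmetrization does not isolate the term you want.} In fact
$$[fD,gE]-[gD,fE]=\bigl(fD(g)-gD(f)\bigr)E+\bigl(fE(g)-gE(f)\bigr)D,$$
which gives no handle on $fD(g)E$ alone.

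\emph{The Witt-algebra recursion needs a grading.} We have $[x^2\partial,h\partial]=(x^2h'-2xh)\partial$, and this is a scalar multiple of $xh\partial$ only when $h$ is homogeneous. On an \'etale chart there is no Euler grading, so ``the recursion closes'' is unsupported even locally.

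\emph{The partition-of-unity globalization fails in principle.} $L$ is not an $A$-module. Writing $x_iv=\sum_\alpha h_\alpha s_\alpha x_iv$ and treating the summands separately already presupposes that $L$ is closed under multiplication by $h_\alpha s_\alpha$, which is what you are trying to prove. Moreover, the global fields $s_\alpha^N\partial_i^{(\alpha)}$ no longer satisfy $D(x_j)\in\{0,1\}$, so the local normalization is lost.

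\textbf{The missing idea} is to pass to second order in the associative algebra $B\subset\mathrm{End}_k(\g)$ generated by the operators $\mathrm{ad}(\delta)$. There the non-tensorial terms cancel exactly:
$$2\,\mathrm{ad}(f\delta)^2-\mathrm{ad}(f^2\delta)\mathrm{ad}(\delta)-\mathrm{ad}(\delta)\mathrm{ad}(f^2\delta)\colon\ \eta\longmapsto 4\,\delta(f)\eta(f)\delta .$$
This operator $4T_{\delta,f}$ is $A$-linear in $\eta$.

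Since $T_{\delta,f}$ is quadratic in $\delta$, polarization gives $T_{(h+1)\delta,f}-T_{(h-1)\delta,f}=4hT_{\delta,f}$. So every $A$-multiple $hT_{\delta,f}$ lies in $B$, and this is where characteristic $\neq2$ is used. On each fiber these rank-one operators span $\mathrm{End}(\g_p)$, so by Nakayama $B\supset\mathrm{End}_A(\g)$.

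In particular, multiplication by each algebra generator $a_i$ is a noncommutative polynomial in finitely many $\mathrm{ad}(\delta_i^j)$. The Lie subalgebra generated by these $\delta_i^j$ together with $A$-module generators of $\g$ is then automatically an $A$-submodule, hence all of $\g$. The local-to-global step takes place inside the $A$-module $\mathrm{End}_A(\g)$, where it is legitimate, rather than inside $L$.

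Your final reduction, that stability under the $x_i$ suffices, matches the paper's. What you lack is a mechanism producing that stability.
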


The proof is based on the following lemma.

\begin{lemma} Let $B\subset\mathrm{End}_k(\g)$ denote the unital subalgebra spanned by the operators $\mathrm{ad}(\delta)$, for $\delta\in\g$. Then $B$ contains $\mathrm{End}_A(\g)\subset\mathrm{End}_k(\g)$. \end{lemma}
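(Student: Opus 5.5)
The plan is to show two things: first, that the multiplication operators $L_a\colon\delta\mapsto a\delta$ lie in $B$ for all $a\in A$; second, that $B$ then contains a family of rank-one $A$-linear operators which span $\mathrm{End}_A(\g)$.

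\emph{Second step, assuming the first.} For $a\in A$ and $\delta,\varepsilon\in\g$ the Leibniz rule gives
$$\mathrm{ad}(a\delta)(\varepsilon)=a[\delta,\varepsilon]-\varepsilon(a)\delta .$$
Hence the operator $P_{a,\delta}\colon\varepsilon\mapsto\varepsilon(a)\delta$ equals $L_a\,\mathrm{ad}(\delta)-\mathrm{ad}(a\delta)$, and it lies in $B$ once $L_a$ does. Composing with $L_f$ on the left gives the operator $\varepsilon\mapsto f\,\varepsilon(a)\,\delta=\langle f\,da,\varepsilon\rangle\delta$. Since $A$ is smooth of finite type, $\g=\mathrm{Hom}_A(\Omega_A,A)$ is finitely generated projective with dual $\Omega_A$. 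Therefore
$$\mathrm{End}_A(\g)\simeq\g\otimes_A\Omega_A ,$$
and this is spanned over $k$ by the operators $\varepsilon\mapsto\langle f\,da,\varepsilon\rangle\delta$, because $\Omega_A$ is spanned by the forms $f\,da$. So everything reduces to the first step.

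\emph{First step: $L_a\in B$.} Let $I=\{a\in A: L_a\in B\}$. Since $L_aL_b=L_{ab}$ and $B$ is a unital subalgebra, $I$ is a $k$-subalgebra containing $1$. Moreover $[\mathrm{ad}(\delta),L_a]=L_{\delta(a)}$, so $I$ is stable under every derivation. It suffices to put a set of algebra generators of $A$ into $I$, or even, using the $A$-module structure, to produce enough elements of $I$ locally and then patch with a partition of unity $\sum g_ih_i=1$ together with $I$ being an ideal-like object once it is nonzero. For this I would look for a quadratic identity expressing $L_a$, or $L_{a^2}$, through products of adjoint operators. The natural candidates are
$$\mathrm{ad}(a^2\delta)\,\mathrm{ad}(\delta')\quad\text{and}\quad \mathrm{ad}(a\delta)\,\mathrm{ad}(a\delta'),$$
compared against the corresponding terms with $L_a$ pulled out. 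In such a comparison the error terms, which are of the form $P_{a,\cdot}$ and $P_{a^2,\cdot}$, should cancel after symmetrising. That symmetrisation produces a factor of $2$, which is where the hypothesis $\mathrm{char}\,k\neq2$ enters. Concretely, I would expand $\mathrm{ad}(a^2\delta)-2L_a\,\mathrm{ad}(a\delta)+L_{a^2}\,\mathrm{ad}(\delta)$, which vanishes identically by the Leibniz rule. I would then use it, together with the version with $a$ and $b$ polarised, to solve for $L_a$ in terms of adjoint operators. For this I would pick $\delta$ with $\delta(a)$ invertible, as is available locally for étale coordinates $a$.

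The main obstacle is exactly this first step: producing any nontrivial multiplication operator inside $B$. The identities above are circular as written, since they already involve $L_a$. The real work is finding the right combination of compositions of adjoint operators in which the $L$'s can be eliminated, using $\delta(a)=1$ locally and the division by $2$. Once a single $L_a$ with $da\neq0$ is obtained, the derivation-stability of $I$, the fact that $I$ is a subalgebra, and the rank-one argument above should close the proof.
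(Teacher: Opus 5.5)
Your second step is correct: once every $L_a$ lies in $B$, the operators $L_f\bigl(L_a\,\mathrm{ad}(\delta)-\mathrm{ad}(a\delta)\bigr)\colon\varepsilon\mapsto f\,\varepsilon(a)\,\delta$ lie in $B$ and span $\g\otimes_A\Omega_A\simeq\mathrm{End}_A(\g)$. The gap is the first step. It carries all the content, you do not prove it, and the tools you list for it do not get it started.

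\begin{itemize}
\item The identity $\mathrm{ad}(a^2\delta)-2L_a\,\mathrm{ad}(a\delta)+L_{a^2}\,\mathrm{ad}(\delta)=0$ is a tautology. In it the $L$'s occur only composed with non-invertible adjoint operators, so it cannot be solved for $L_a$.
\item The localisation and patching remarks do not apply. $B$ is built from global derivations acting on the global module $\g$, and nothing lets you transport a local choice of $\delta$ with $\delta(a)$ a unit back to $B$.
\item $I$ is only a derivation-stable $k$-subalgebra (a priori just $k$), not an ideal of $A$. So a partition of unity $\sum g_ih_i=1$ is useless unless you already know $L_{g_i},L_{h_i}\in B$.
\end{itemize}
Producing a single non-constant element of $I$ is the whole problem, and the proposal offers no mechanism for it.

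The missing idea, which is how the paper proceeds, is to stop aiming at $L_a$ and aim directly at $A$-linear operators of rank-one type. Your candidate products are the right ingredients: take $\delta'=\delta$ and add the reversed product. The Leibniz rule then gives
\[
2\,\mathrm{ad}(f\delta)^2-\mathrm{ad}(f^2\delta)\,\mathrm{ad}(\delta)-\mathrm{ad}(\delta)\,\mathrm{ad}(f^2\delta)=4T_{\delta,f},\qquad T_{\delta,f}(\eta)=\delta(f)\,\eta(f)\,\delta,
\]
because all terms involving $[\delta,\eta]$, $[\delta,[\delta,\eta]]$ and second derivatives of $f$ cancel. No multiplication operator appears.

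Since $T_{\delta,f}$ is quadratic in $\delta$, one has $T_{(h+1)\delta,f}-T_{(h-1)\delta,f}=4h\,T_{\delta,f}$. Hence every $h\,T_{\delta,f}$ lies in $B$, and this is where $\mathrm{char}\,k\neq 2$ is used.

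Finally, at a point $p$ the fibre of $T_{\delta,f}$ is $\alpha(v)\,v\otimes\alpha$ with $v=\delta_p$ and $\alpha=(df)_p$. These span $\mathrm{End}(\g_p)$, so by Nakayama the $A$-span of the $T_{\delta,f}$ is all of $\mathrm{End}_A(\g)$. In this route $L_a\in B$ is a consequence rather than an input, and your second step becomes unnecessary.
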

\begin{proof}
Define, for $\delta\in\g$ and $f\in \OO$, the operator $T_{\delta,f}:\g\to\g$ by $$T_{\delta,f}(\eta)=\delta(f)\eta(f)\delta.$$ An explicit computation confirms the following identies: $$4T_{\delta,f}=2\mathrm{ad}(f\delta)^2-\mathrm{ad}(f^2\delta)\mathrm{ad}(\delta)-\mathrm{ad}(\delta)\mathrm{ad}(f^2\delta),$$ $$T_{(h+1)\delta,f}-T_{(h-1)\delta,f}=4hT_{\delta,f}.$$ In particular we see that any operator $hT_{\delta,f}$ lies inside $B$, using that $k$ does not have characteristic $2$. It suffices now to check that the submodule generated by the operators $T_{\delta,f}$ is all of $\mathrm{End}_A(\g)$. It suffices to check this on all points $p$. Smoothness implies $$\mathrm{End}_A(\g)\otimes k_p\simeq\mathrm{End}_A(\g_p),$$ where $k_p$ denotes the residue field at $p$ and $\g_p=\g\otimes_Ak_p$ the fiber of the tangent sheaf at $p$. For any $\delta_p$ in $\g_p$ we can lift to some $\delta\in\g$. We can also lift any $\alpha_p\in \g_p\dl$ to some some $df$ in the cotangent sheaf $\mathrm{Hom}_A(\g,A)$. The operator $T_{\delta,f}$ then restricts to $$\alpha_p(v)v\otimes\alpha\in \g_p\otimes\g_p\dl\simeq\mathrm{End}_k(\g_p).$$ These obviously span, so we are done.
\end{proof}
\begin{proof} (\emph{Of Theorem \ref{main-appendix}}). Let $\{a_i\}_i$ be a finite set of generators of $A$. Then each $a_i\in A\subset\mathrm{End}_A(\g)\subset\mathrm{End}_k(\g)$ can be expressed as a polynomial in finitely many operators $\mathrm{ad}(\delta_i^j)$. Let $\nu_l$ denote generators of $\g$ as an $A$-module. Then we claim that $$S:=\{\delta_i^j,\nu_l\}_{i,j,l}$$ generates $\g$. Indeed let $\mathfrak{h}\subset\g$ be the sub Lie algebra generated by $S$. It contains $A$-module generators for $\g$ by definition, so it suffices to show that it is an $A$ submodule. For this it suffices to show that it is stable under multiplication by the generators $a_i$. This follows immediately from the fact that each $a_i$ is expressable as a polynomial in the operators $\mathrm{ad}(\delta_i^j)$, and the fact that $\mathfrak{h}$ is obviously stable under the adjoint action of each $\delta_i^j$. \end{proof}

\begin{remark} That characteristic $\neq 2$ is necessary is easy to see. $\mathrm{Vect}(\aff^1_k)$ is not finitely generated in characteristic $2$. Indeed recall the basis $L_n=-z^{n+1}\partial$ with respect to which we have brackets $$[L_n,L_m]=(n-m)L_{n+m}.$$ Then if $n+m$ is even, $n-m$ is also even and so vanishes in $k$, so we only obtain terms $L_{2l+1}$ as commutators. \end{remark}

\end{document}